\documentclass[11pt]{article}

\usepackage{amsfonts,amssymb,amsmath,amsthm,epsfig,euscript}
\usepackage{graphicx}
\usepackage{enumerate}
\usepackage{comment}

\setlength{\textwidth}{6.3in}
\setlength{\textheight}{8.7in}
\setlength{\topmargin}{0pt}
\setlength{\headsep}{0pt}
\setlength{\headheight}{0pt}
\setlength{\oddsidemargin}{0pt}
\setlength{\evensidemargin}{0pt}

\newtheorem{theorem}{Theorem}

\newtheorem{definition}[theorem]{Definition}

\long\def\symbolfootnote[#1]#2{\begingroup
	\def\thefootnote{\fnsymbol{footnote}}\footnote[#1]{#2}\endgroup}








\newcommand{\cref}[1]{Corollary \ref{corollary:#1}}

\newcommand{\red}{\mathrm{red}}

\newcommand{\Pmch}[1]{\text{$P$-$\mathrm{mch}$}(#1)}
\newcommand{\Gammamch}[1]{\text{$\Gamma$-$\mathrm{mch}$}(#1)}

\newcommand{\bounce}{\mathrm{bounce}}
\newcommand{\cross}{\mathrm{cross}}

\newcommand{\fig}[3]
{\begin{figure}[ht]
		\centerline{\scalebox{#1}{\epsfig{file=#2.eps}}}
		\vspace{-1mm}
		\caption{#3}
		\label{fig:#2}
	\end{figure}}

	\title{Paired patterns in lattice paths}
	
	\author{
		Ran Pan \\
		\small Department of Mathematics\\[-0.8ex]
		\small University of California, San Diego\\[-0.8ex]
		\small La Jolla, CA 92093-0112. USA\\[-0.8ex]
		\small \texttt{ran.pan.math@gmail.com}
		\and
		Jeffrey B. Remmel \\
		\small Department of Mathematics\\[-0.8ex]
		\small University of California, San Diego\\[-0.8ex]
		\small La Jolla, CA 92093-0112. USA\\[-0.8ex]
		\small \texttt{remmel@math.ucsd.edu}
		\and
	}
	
	\date{}
	
	\begin{document}
		\maketitle
		
		\begin{abstract}
			Let $\mathcal{L}_n$ denote the set of all paths from $[0,0]$ to $[n, n]$ which consist of either unit north steps $N$ or unit east steps $E$ or, equivalently, 
			the set of all words $L \in \{E,N\}^*$ with $n$ $E$'s and 
			$n$ $N$'s. Given $L \in \mathcal{L}_n$ and a subset $A$ of $[n] = \{1, \ldots, n\}$,  we let $ps_{L}(A)$ denote 
the word that results from $L$ by removing 
			the $i^{th}$ occurrence of $E$ and the $i^{th}$ occurrence of $N$ in 
			$L$ for all $i \in [n]-A$, reading from left to right. Then we say that a paired pattern 
			$P \in \mathcal{L}_k$ occurs in $L$ if there is some $A \subseteq [n]$ 
			of size $k$ such that $ps_L(A) = P$. 
			In this paper, we study the  generating functions of paired 
			pattern matching in $\mathcal L_n$.
		\end{abstract}

		\section{Introduction}
		Let $\mathcal L_n$ denote the set of all paths from 
		$[0,0]$ to $[n,n]$ which consist 
		of either unit north $[0,1]$ steps or unit east $[1,0]$ steps.
		The six paths in $\mathcal L_2$ are pictured at the top of 
		Figure \ref{fig:L2F2}. Clearly,
		$$
		|\mathcal L_n|=\binom{2n}{n}.
		$$
		
		We code elements in $\mathcal L_n$  as  words over the alphabet $\{N,E\}$ with 
		$n$ $N$'s and $n$ $E$'s. Given $L \in \mathcal{L}_n$ and a subset $A$ of $[n] = \{1, \ldots, n\}$,  we let $ps_{L}(A)$ 
		denote the word that results from $L$ by removing 
		the $i^{th}$ occurrence of $E$ and the $i^{th}$ occurrence of $N$ in 
		$L$ for all $i \in [n]-A$, reading from left to right.
		For example, suppose $L=NEEENN\in\mathcal L_3$, then 
		$ps_L(\{1\})=NE$, $ps_L(\{2\})=EN$, $ps_L(\{3\})=EN$, 
		$ps_L(\{1,2\}) = ps_L(\{1,3\}) = NEEN$, and $ps_L(\{2,3\}) = EENN$.  
		We shall think of a word in $\{N,E\}$ with 
		$n$ $N$'s and $n$ $E$'s as a {\em paired pattern} where 
		the $i^{th}$ occurrence of $E$ is paired with the $i^{th}$ occurrence 
		of $N$, reading from left to right, for $i =1, \ldots, n$. 
		\begin{definition}
			Given a set of paired patterns $\Gamma \subseteq  \mathcal L_k$ and 
			word $L \in \mathcal{L}_n$, we say that  
			\begin{enumerate}
				\item $\Gamma$ {\bf occurs} in $L$ if there is an $A \subseteq [n]$ of 
				size $k$ such that $ps_L(A) \in \Gamma$, 
				\item there is a {\bf $\Gamma$-match in $L$ starting at the 
					$j^{th}$ paired step}\\ if $ps_L(\{j,j+1,j+2\cdots,j+k-1\}) \in \Gamma$ and
				\item $L$ {\bf avoids} $\Gamma$ if there is no  $\Gamma$-matches in $L$.
			\end{enumerate}
		\end{definition}
		
		Alternatively, we can code a path $L$ as a $2 \times n$ array $T(L)$ 
		where the bottom row of $T$ consists of the positions of the east steps, 
		reading from left to right, and the top row of $T$ consists of the positions 
		of the north steps, reading from left to right. We let $T(L)_{k,1}$ denote the element in the $k^{th}$ column of the bottom row of $T(L)$, and let  $T(L)_{k,2}$ denote the element in the $k^{th}$ column of the top row. Given 
		any $2 \times n$ array $S$ filled with pairwise distinct positive 
		integers, let the reduction of $S$, $\red(S)$, denote the $2 \times n$ 
		array which results 
		from $S$ by replacing the $i^{th}$ smallest integer in $S$ by 
		$i$. An example of the reduction operation $\red$ is pictured 
		at the bottom of Figure \ref{fig:Correspond}.

		\fig{1.0}{Correspond}{The correspondence between paths and 
			$2 \times n$ arrays.}

		It is then easy to see that given $L \in \mathcal{L}$ and 
		$A \subseteq [n]$, the array associated with $ps_L(A)$ 
		corresponds to the array obtained by taking the columns 
		in $T(L)$ corresponding to $A$ and reducing.  This process is 
		pictured in Figure \ref{fig:Correspond}. This given, 
		we can restate our pattern matching conditions in terms 
		of $2 \times n$ arrays.  That is, the 
		$\mathcal{T}_n$ denote the set of all $2 \times n$ arrays $T$ filled 
		with the numbers $1,2 \ldots, 2n$ such that the rows of $T$ are 
		increasing reading from left to right. Given $T \in \mathcal{T}_n$ and 
		$A \subseteq [n]$, we let $T[A]$ be the array that results 
		by removing the columns corresponding to elements in $[n]-A$. 
		For example, if $T =T(L)$ is the array pictured in Figure 
		\ref{fig:Correspond}, then $T[\{1,4,5\}]$ is pictured at the bottom 
		left of Figure \ref{fig:Correspond}.

		Then from the point of view of arrays in $\mathcal{T}_n$, our 
		paired pattern matching conditions can be stated as follows. 
		\begin{definition}
			Given a set of $2 \times k$ arrays $\Gamma \subseteq \mathcal{T}_k$ and 
			a $2 \times n$ array $S \in \mathcal{T}_n$, we say that  
			\begin{enumerate}
				\item $\Gamma$ {\bf occurs} in $S$ if there is an $A \subseteq [n]$ of 
				size $k$ such that $\red(S[A]) \in \Gamma$,  
				\item there is a {\bf $\Gamma$-match in $S$ starting at column 
					$j$} if $\red(S[\{j,j+1,j+2\cdots,j+k-1\}]) \in \Gamma$ and
				
				\item $S$ {\bf avoids} $\Gamma$ if there is no  $\Gamma$-matches in $S$.
			\end{enumerate}
		\end{definition}
		
		Note that from this point of view, $\Gamma$-matches correspond naturally 
		to consecutive patterns matches in $2 \times n$ arrays. Results about consecutive patterns in arrays can be found in \cite{HR}.
		We let  $\Gammamch{L}$ denote the number of  $\Gamma$-matches in $L$. 
		If $\Gammamch{L}=0$, then we will say that $L$ has no $\Gamma$-matches. 
		If $\Gamma =\{P\}$ is a singleton, then we will write 
		$\Pmch{L}$ for $\Gammamch{L}$.

		For example, there are six possible patterns of length four, as pictured in Figure \ref{fig:L2F2}, namely, 
		$P_1=EENN,P_2=ENEN,P_3=NEEN,P_4=ENNE,P_5=NENE,P_6=NNEE$.
		
		\fig{1.2}{L2F2}{$\mathcal L_2=\{P_1,P_2,P_3,P_4,P_5,P_6.\}$}
		
		We note that paired patterns differ from classic consecutive patterns 
		in words (e.g. \cite{Deu} \cite{STTM} \cite{STT}). Paired patterns actually describe relationships between paths and the diagonal $y=x$, the 
subdiagonal $y=x-1$, and the superdiagonal  $y=x+1$. For our purposes, 
		the set of Dyck paths $\mathcal{D}_n$ is the set of paths 
		of $\mathcal{L}_n$ which stay weakly below the diagonal $y =x$. 
		For example, in $\mathcal L_2$, the only two Dyck paths are $P_1$ and $P_2$. Actually, a path $L$ is a Dyck path if and only if $L$ has no $(\mathcal L_2-\{P_1,P_2\})$-matches. More details and geometric interpretation of paired patterns can be found in Section 2.
		
		By Theorem \ref{thm:bounce_thm} and Theorem \ref{thm:cross_thm}, we see that some certain paired patterns are equivalent to returns (bouncings) and crossings of a path. These classical statistics  have been studied in literature such as \cite{Moh}, \cite{GoSun}, \cite{SenJain} and \cite{Kra}. 
		
		In this paper, we will focus on paired patterns of length 4 and pattern matching for subsets of these pattern. In other wordsd, we would study generating functions of the form  
		\begin{eqnarray}
		F_{P_k}(x,t)&:=&1+\sum_{n\geq 1}t^n\sum_{L\in\mathcal L_n}x^{P_k\text{-mch}(L)},
		\end{eqnarray}
		where $k\in\{1,2,3,4,5,6\}$, and
		\begin{eqnarray}
		F_{\Delta}(\mathbf{x},t)&:=&1+\sum_{n\geq 1}t^n\sum_{L\in\mathcal L_n}\left(\prod_{j\in \Delta} x_j^{P_j\text{-mch}(L)}\right),
		\end{eqnarray}
		 where $\Delta$ is a subset of $\{1,2,3,4,5,6\}$.

		Note there are two basic symmetries in our study of paired patterns. 
		First one can reflect a path $L \in \mathcal{L}_n$ about the diagonal 
		$y=x$ which has the effect of interchanging $E$'s with $N$'s in the 
		word of $L$ or interchanging the rows in the diagram of 
		$T(L)$ of $L$. Second, one can rotate the path by 180 degrees which 
		has the effect of interchanging the $E$'s and $N$'s and then 
		reversing the word of $L$.  These symmetries immediately 
		show that 
		\begin{eqnarray*}
			F_{P_1}(x,t) &=& F_{P_6}(x,t), \\
			F_{P_2}(x,t) &=& F_{P_5}(x,t), \ \mbox{and} \\
			F_{P_3}(x,t)&=& F_{P_4}(x,t).
		\end{eqnarray*}
		Thus we need only compute three generating functions of the form 
		$F_{P_k}(x,t)$.  
		
		We can also give geometric interpretations to $P_k$-matches for 
		each $k$. For example, we shall show that the number of $P_1$-matches 
		in a path $L \in \mathcal{L}_n$ is the number of east steps that are below 
		the subdiagonal $y =x-1$. The formulas for the generating functions 
		that we will derive, then lead to many interesting bijective problems. 
		For example, we will show that the total number of east steps 
		that lie below the subdiagonal $y =x-1$ over all paths 
		$L \in \mathcal{L}_n$ equals the sum of the areas under  all Dyck paths 
		in $\mathcal D_n$. 
		
		The outline of this paper is as follows. 
		In Section 2, we shall give the geometric interpretations 
		of the number of $P_k$-matches in paths in $\mathcal{L}_n$. 
		In Section 3, we shall derive closed formulas for the generating 
		functions $F_{P_k}(x,t)$ for $k=1, \ldots, 6$ and explore some of 
		the consequences of such formulas. 
		In Section 4, we derive a number of 
		formulas for $F_\Delta(\mathbf{x},t)$ for certain $\Delta \subseteq 
		\{P_1, \ldots, P_6\}$. Finally, in Section 5, we discuss topics for 
		future research such as finding bijections between paths with certain pattern matching condition and other known objects, extending the definition of paired patterns 
		to  Delannoy paths and finding generating functions 
		$F_{P}(x,t)$ for paths $P$ of length greater than $4$. 
		
		\section{The geometric interpretation of the number 
			of $P_k$-matches.}

		In this section, we shall give our geometric interpretations of 
		$P_k$-matches for $k=1, \ldots, 6$. 
		
		\begin{theorem} Let $L \in \mathcal{L}_n$. Then the number 
			of $P_1$-matches in $L$ is the number of east steps below the 
			subdiagonal $y=x-1$.  Hence, by symmetry, the number 
			of $P_6$-matches in $L$ is the number of north steps above the 
			superdiagonal $y=x+1$.
		\end{theorem}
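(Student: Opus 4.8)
The plan is to push the $P_1$-matching condition through the array description of $\mathcal{T}_n$ and then read off its geometric content step by step. Write $T = T(L)$ and recall that the array of $P_1 = EENN$ has bottom row $(1,2)$ and top row $(3,4)$. By definition there is a $P_1$-match starting at column $j$ exactly when $\red(T[\{j,j+1\}]) = P_1$. Since the rows of $T$ are increasing we always have $T_{j,1} < T_{j+1,1}$ and $T_{j,2} < T_{j+1,2}$, so after reduction the four entries are in the order prescribed by $P_1$ if and only if $T_{j+1,1} < T_{j,2}$; that is, if and only if the position of the $(j+1)^{\mathrm{th}}$ east step precedes the position of the $j^{\mathrm{th}}$ north step in the word $L$.

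Next I would give this inequality a geometric reading. For $1 \le k \le n$ let $b_k$ denote the number of north steps that occur before the $k^{\mathrm{th}}$ east step, so that this east step runs from $(k-1,b_k)$ to $(k,b_k)$. The inequality $T_{j+1,1} < T_{j,2}$ says precisely that strictly fewer than $j$ north steps precede the $(j+1)^{\mathrm{th}}$ east step, i.e. $b_{j+1} \le j-1$. On the other hand the $(j+1)^{\mathrm{th}}$ east step lies below the subdiagonal $y=x-1$ exactly when its right endpoint $(j+1,\,b_{j+1})$ lies in the region $y < x-1$, i.e. when $b_{j+1} < (j+1)-1$, which is again $b_{j+1} \le j-1$. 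Hence there is a $P_1$-match at column $j$ if and only if the $(j+1)^{\mathrm{th}}$ east step lies below the subdiagonal.

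To conclude I would note that the first east step can never lie below $y=x-1$, since its right endpoint $(1,b_1)$ has $b_1 \ge 0$. Thus every east step below the subdiagonal is the $(j+1)^{\mathrm{th}}$ step for a unique $j \in \{1,\dots,n-1\}$, and $k \mapsto k-1$ is a bijection between the east steps below the subdiagonal and the columns carrying a $P_1$-match. This proves that $\vartmch{P_1}{L}$ equals the number of east steps below $y=x-1$. The $P_6$ statement then follows from the reflection about the diagonal described in the introduction: it exchanges $E$ and $N$ (hence $P_1 = EENN$ with $P_6 = NNEE$) and sends the subdiagonal $y=x-1$ to the superdiagonal $y=x+1$, so east steps below the former become north steps above the latter.

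The main obstacle is fixing the correct convention for when ``an east step lies below the subdiagonal.'' A horizontal step meets the line $y=x-1$ in at most one point, so the phrase must be interpreted by testing the right endpoint, yielding the criterion $b_k \le k-2$; checking small cases (for instance $EENN$ has exactly one such east step and one $P_1$-match, whereas $ENEN$ has none of either) confirms that this is the convention under which the geometric count and the array inequality agree.
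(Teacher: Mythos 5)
Your proof is correct and takes essentially the same route as the paper's: both reduce a $P_1$-match at columns $j,j+1$ of $T(L)$ to the single inequality $T(L)_{j+1,1}<T(L)_{j,2}$ and then reinterpret that as a count of north steps preceding the $(j+1)^{\mathrm{st}}$ east step. Your version is slightly more careful about the endpoint convention for ``below the subdiagonal'' and about excluding the first east step, but the underlying argument is the same.
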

		\begin{proof} Suppose that the $i^{th}$ east step in $L$ 
			occurs below the subdiagonal $y=x-1$ and that this step corresponds 
			to the $j^{th}$ letter in the word $w_1 \ldots w_{2n}$ of $L$.  Then it must be 
			the case that  the number of $E$'s in $w_1 \ldots w_j$ exceeds 
			the number of $N$'s in $w_1 \ldots w_j$ by at least two. This means 
			that when we restrict the diagram $T(L)$ to the letters $1, \ldots, j$, 
			then there are no elements in the $(i-1)^{th}$ and $i^{th}$ columns of the bottom row. This means that $T(L)_{i-1,1}<T(L)_{i,1} < T(L)_{i-1,2}<T(L)_{i,2}$ 
			so that $\red(T(L)[\{i-1,i\}])$ matches the array for $P_1$. 
			Hence each such east step represents a $P_1$-match in $L$.

			On the other hand suppose  $\red(T(L)[\{i-1,i\}])$ matches the array for $P_1$. 
			If $T(L)_{i,1} =j$, then in the word $w =w_1 \ldots w_{2n}$ of 
			$L$, the $j^{th}$ $E$, reading from right to left, must be proceded 
			by at most $i-2$ north steps which means that the east step corresponding 
			to $w_j$ is below the subdiagonal $y =x-1$. 
		\end{proof}

		Given a path $L \in \mathcal{L}_n$, we let 
		$\bounce^{-}(L)$ denote the number of points $[x,x]$ on $L$ 
		such that the preceding step is a north step $N$ and the 
		following step is an east step $E$. This means 
		that the path bounces off the diagonal to the right. 
		We let $\bounce^{+}(L)$ denote the number of points $[y,y]$ on $L$ which 
		is preceded by an east step $E$ and followed by a north step $N$. This means 
		that the path bounces off the diagonal to the left.
		For example,
		for the path $L$ pictured in Figure \ref{fig:bounce}, 
		the points $[6,6]$, $[7,7]$, and $[8,8]$ 
		are points preceded by a north step and followed by an east step so that 
		$\bounce^{-}(L) =3$ and the point $[4,4]$ is preceded by an 
		east step and followed by a north step so that $\bounce^{+}(L) =1$.

		\fig{1.0}{bounce}{$\bounce^{+}(L)=1$, $\bounce^{-}(L)=3$, $\cross^h(L)=1$ and $\cross^v(L)=2$.}

		\begin{theorem}\label{thm:bounce_thm} Let $L \in \mathcal{L}_n$. Then the number 
			of $P_2$-matches in $L$ equals $\bounce^{-}(L)$. Hence, 
			by symmetry, the number 
			of $P_5$-matches in $L$ equals $\bounce^{+}(L)$.
		\end{theorem}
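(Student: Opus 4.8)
The plan is to set up a direct bijection between the points counted by $\bounce^{-}(L)$ and the columns at which a $P_2$-match occurs, mirroring the proof of the previous theorem. First I would record the array of $P_2 = ENEN$: its two east steps sit in positions $1$ and $3$ and its two north steps in positions $2$ and $4$, so $\red(T(L)[\{j,j+1\}])$ matches $P_2$ exactly when $T(L)_{j,1} < T(L)_{j,2} < T(L)_{j+1,1} < T(L)_{j+1,2}$. Reading this off the word $w_1 \cdots w_{2n}$ of $L$, this says that the $j^{th}$ east step, the $j^{th}$ north step, the $(j+1)^{th}$ east step, and the $(j+1)^{th}$ north step occur in exactly that left-to-right order.

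For the forward direction, I would start from a $P_2$-match at columns $j,j+1$ and locate the corresponding diagonal point. Consider the position in the word immediately after the $j^{th}$ north step. Since the $j^{th}$ east step precedes it and the $(j+1)^{th}$ east step follows it, exactly $j$ east steps and $j$ north steps have been read at this moment, so $L$ is at the point $[j,j]$. The step arriving at $[j,j]$ is the $j^{th}$ north step, and I claim the next step is east: the only north step that could appear next would be the $(j+1)^{th}$, but the ordering forces it to occur strictly after the $(j+1)^{th}$ east step, so the very next letter is that $(j+1)^{th}$ east step. Hence $[j,j]$ is preceded by $N$ and followed by $E$, contributing to $\bounce^{-}(L)$.

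For the converse I would run this argument backward: if $[x,x]$ is preceded by a north step and followed by an east step, then $x$ east and $x$ north steps have been read on arrival, the arriving north step is the $x^{th}$ north step (so the $x^{th}$ east step already occurred), and the departing east step is the $(x+1)^{th}$ east step, whose matching $(x+1)^{th}$ north step must come later still. This yields the order $E_x, N_x, E_{x+1}, N_{x+1}$, i.e.\ a $P_2$-match at columns $x,x+1$. Since each such point determines a unique match column and conversely, the correspondence is a bijection and $\vartmch{P_2}{L} = \bounce^{-}(L)$. The statement for $P_5$ then follows by applying the reflection of $L$ about the diagonal $y=x$, which interchanges $E$'s and $N$'s, carries $P_2 = ENEN$ to $P_5 = NENE$, and sends each bounce-right point to a bounce-left point.

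The step I expect to require the most care is verifying that the letter immediately following the $j^{th}$ north step is genuinely the $(j+1)^{th}$ east step rather than an intervening north step; this is exactly where the strict inequality $T(L)_{j+1,1} < T(L)_{j+1,2}$ (equivalently, $E_{j+1}$ before $N_{j+1}$) is needed, and it is the crux that guarantees the identified point is a true bounce off the diagonal.
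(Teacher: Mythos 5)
Your proof is correct and follows essentially the same route as the paper: both translate the $P_2$-match condition into the relative order $E_j, N_j, E_{j+1}, N_{j+1}$ in the word of $L$ and identify the diagonal point $[j,j]$ reached just after the $j^{th}$ north step as a right bounce, arguing both directions of the correspondence. The only cosmetic difference is that the paper pins down the exact values $T(L)_{i-1,2}=2(i-1)$ and $T(L)_{i,1}=2(i-1)+1$ in the array, whereas you reason directly about the order of the four relevant letters; your version is, if anything, a bit more careful about the indexing.
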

		\begin{proof} Consider the diagram $T(L)$ of $L$. 
			Then a $P_2$-match in $L$ corresponds to a pair of columns 
			$i-1$ and $i$ such  $\red(T(L)[\{i-1,i\}])$ matches the array for $P_2$.
			This means that $T(L)_{i-1,1}<T(L)_{i-1,2} < T(L)_{i,1}<T(L)_{i,2}$. Now suppose 
			that $T(L)_{i-1,2} =x$. It follows that all the elements 
			in the columns to the right of $x$ must be greater than $x$ and all 
			the elements in the columns to the left of $x$ must be less than $x$. 
			Since $x > T(L)_{i-1,1}$ it follows that $x =2(i-1)$. Similarly, 
			if $T(L)_{i,1} =y$, then all the elements 
			in the columns to the right of $y$ must be greater than $y$ and all 
			the elements in the columns to the left of $y$ must be less than $y$. 
			Since $y < T(L)_{i-1,1}$ it follows that $y =2(i-1)+1$. This 
			means that in the word of $w =w_1 \ldots w_{2n}$ of $L$, 
			$w_{2(i-1)}= N$ and is preceded by $i$ east steps and 
			$i-1$ north steps so that point $[i,i]$ is on the path of $L$ and 
			is preceded by a north step and followed by an east step. 
			
			Vice versa, if $[i,i]$ is on the path of $L$ and 
			is preceded by a north step and followed by an east step, then 
			it is easy to see that in the array $T(L)$ of $L$, 
			we must have that $T(L)_{i-1,2} =2(i-1)$ and $T(L)_{i,1} =2(i-1)+1$ so 
			that $\red(T(L)[\{i-1,i\}])$ must match $P_2$. 
		\end{proof}

		Given a path $L \in \mathcal{L}_n$, we let 
		$\cross^{h}(L)$ denote the number of points $[x,x]$ on $L$ 
		such that the preceding step is an east step  $E$ and the 
		following step is an east step $E$. This means 
		that the path crosses the diagonal horizontally. 
		We let $\cross^{v}(L)$ denote the number of points $[y,y]$ on $L$ which 
		is preceded by a north step $N$  and followed by a north step $N$. This means 
		that the path crosses the diagonal vertically.
		For example,
		for the path $L$ pictured in Figure \ref{fig:bounce}, there is 
		a horizontal crossing of the diagonal at the point $[5,5]$ so 
		that $\cross^h(L) =1$ and there are vertical crossings at the 
		points $[4,4]$ and $[9,9]$ so that $\cross^v(L) =2$.

		\begin{theorem} \label{thm:cross_thm}Let $L \in \mathcal{L}_n$. Then the number 
			of $P_3$-matches in $L$ equals $\cross^h(L)$. Hence, 
			by symmetry, the number 
			of $P_4$-matches in $L$ equals $\cross^v(L)$
		\end{theorem}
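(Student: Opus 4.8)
The plan is to mirror the proof of Theorem \ref{thm:bounce_thm} almost verbatim, changing only the letter pattern that controls the step types adjacent to the diagonal point. The key structural fact is the same in both cases: a $P_3$-match at columns $i-1,i$ is a purely local condition on $T(L)$, namely that $\red(T(L)[\{i-1,i\}])$ matches the array for $P_3=NEEN$. Writing out what that array looks like, the match condition becomes $T(L)_{i-1,1}<T(L)_{i,1}<T(L)_{i,2}<T(L)_{i-1,2}$ (the two east steps of this pair are nested strictly between the two north steps). My first step is to translate this inequality into a statement about which of the numbers $1,\dots,2n$ occupy these four cells, exactly as was done for $P_2$.

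The second step is the ``separating value'' argument. I would look at the middle east step, say $T(L)_{i,1}=j$. Because the rows of $T(L)$ are increasing and the columns are ordered left to right, I want to argue that every entry in columns to the left of the pair is smaller than $j$ and every entry to the right is larger, which forces $j$ to be a specific number of the form $2(i-1)+1$ or similar, pinning down the position of the corresponding step in the word $w=w_1\cdots w_{2n}$. From the two nested east steps I would then read off that the letter of $w$ sitting at the diagonal point $[i,i]$ is preceded by an east step and followed by an east step, i.e. that the path crosses the diagonal horizontally at $[x,x]$ for the appropriate $x$. This establishes that each $P_3$-match produces exactly one horizontal crossing.

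For the converse I would run the same computation backwards: given a point $[x,x]$ on $L$ with an $E$ immediately before and an $E$ immediately after, I count the $E$'s and $N$'s among $w_1\cdots w_j$ at the relevant index to locate the exact values that must appear in the two columns straddling this crossing, and verify directly that reducing those two columns yields the array for $P_3$. The symmetry claim for $P_4$ then follows for free from the reflection-about-$y=x$ symmetry already recorded in the introduction, which interchanges $E$'s and $N$'s and hence sends horizontal crossings to vertical crossings and $P_3$ to $P_4$; I only need to invoke it in one sentence.

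The main obstacle, and the only place the argument is genuinely different from the $P_2$ case, is the bookkeeping for the \emph{nested} pattern $NEEN$: unlike the ``cascading'' pattern $P_2$ where each of the four entries was forced to be a consecutive run of values, here the two north steps of the pair need not be globally extremal, so I must be careful to use the crossing hypothesis (two consecutive east steps at the diagonal) to control the parity/count of steps before position $j$ rather than relying on the entries being the literal minima or maxima. Getting the index arithmetic right so that the forced values really do reduce to $NEEN$ and not to some other length-$2$ pattern is where I expect to spend the most care.
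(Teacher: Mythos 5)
Your overall strategy is exactly the paper's: analyze the $2\times 2$ subarray $\red(T(L)[\{i-1,i\}])$, use a separating-value argument to pin the two east-step entries down to the consecutive values $2(i-1)$ and $2(i-1)+1$, and translate back to two consecutive $E$'s straddling a point of the diagonal. However, the inequality you wrote for the match condition is wrong, and not harmlessly so: $T(L)_{i-1,1}<T(L)_{i,1}<T(L)_{i,2}<T(L)_{i-1,2}$ forces $T(L)_{i,2}<T(L)_{i-1,2}$, i.e.\ a decreasing top row, which is impossible for any array in $\mathcal{T}_n$. Taken literally your match condition is unsatisfiable, so the forward direction of your proof would show vacuously that no path has a $P_3$-match. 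For $P_3=NEEN$ the first north step occupies position $1$ and the second position $4$, so the correct chain is $T(L)_{i-1,2}<T(L)_{i-1,1}<T(L)_{i,1}<T(L)_{i,2}$, which is what actually encodes your (correct) verbal description that the two east steps sit strictly between the two north steps. With that fix, your separating-value step must be run for both $T(L)_{i-1,1}$ and $T(L)_{i,1}$, not just the single entry you call the ``middle east step'': each is larger than everything in the columns to its left (using $T(L)_{i-1,2}<T(L)_{i-1,1}$ to handle the top row) and smaller than everything in the columns to its right, which gives $T(L)_{i-1,1}=2(i-1)$ and $T(L)_{i,1}=2(i-1)+1$ and hence two consecutive east steps meeting the diagonal.

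Your closing concern that the nested pattern makes this case ``genuinely different'' from the $P_2$ case is unfounded: in the paper the two proofs are word-for-word parallel. In both cases only the two middle values of the local order are pinned down to $2(i-1)$ and $2(i-1)+1$ (for $P_2$ these are $T(L)_{i-1,2}$ and $T(L)_{i,1}$; for $P_3$ they are $T(L)_{i-1,1}$ and $T(L)_{i,1}$), and in neither case does the argument ever need an entry to be a global extremum. The converse direction and the one-line symmetry reduction for $P_4$ are as you describe and match the paper.
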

		\begin{proof} Consider the diagram $T(L)$ of $L$. 
			Then a $P_3$-match in $L$ corresponds to a pair of columns 
			$i-1$ and $i$ such that $\red(T(L)[\{i-1,i\}])$ matches the array for $P_3$.
			This means that $T(L)_{i-1,2}<T(L)_{i-1,1} < T(L)_{i,1}<T(L)_{i,2}$. Now suppose 
			that $T(L)_{i-1,1} =x$. It follows all the elements 
			in the columns to right of $x$ must be greater than $x$ and all 
			elements in the columns to the left of $x$ must be less than $x$. 
			Since $x > T(L)_{i-1,2}$ it follows that $x =2(i-1)$. Similarly, 
			if $T(L)_{i,1} =y$. It follows all the elements 
			in the columns to the right of $y$ must be greater than $y$ and all 
			the elements in the columns to the left of $y$ must be less than $y$. 
			Since $y < T(L)_{i-1,1}$ it follows that $y =2(i-1)+1$. This 
			means that in the word of $w =w_1 \ldots w_{2n}$ of $L$, 
			$w_{2(i-1)}= E$ and is preceded by $i-1$ east steps and 
			$i$ north steps so that point $[i,i]$ is on the path of $L$ and 
			is preceded by an east step and followed by an east step. 
			
			Vice versa, if $[i,i]$ is on the path of $L$ and 
			is preceded by an east step and followed by an east step, then 
			it is easy to see that in the array $T(L)$ of $L$, 
			we must have that $T(L)_{i-1,1} =2(i-1)$ and $T(L)_{i,1} =2(i-1)+1$ so 
			that $\red(T(L)[\{i-1,i\}]$ must match $P_3$. 
		\end{proof}

\section{Generating functions}
Let $F_i(x,t) = F_{P_i}(x,t)$ for $i=1, \ldots, 6$.
The goal of this section is to compute 
the generating functions $F_k(x,t)$ for $k =1, \ldots, 6$.

To obtain a recurrence for Dyck paths, the usual way is to factorize Dyck paths based on where it returns to the diagonal for the first time. Application of this decomposition can be found in many papers focused on lattice path enumeration such as \cite{Deu}, \cite{GoSun} and \cite{STT}. We shall show that similar ideas 
allow us to obtain recurrences for the number of $P_k$-matches.

\subsection{Pattern $P_1$}\label{sec:P1}
For pattern $P_1$, consider the ordinary generating function $F_1(x,t)$ as follows,
\begin{equation}
F_{1}(x,t):=1+\sum_{n\geq 1}t^n\sum_{L\in \mathcal L_n}x^{P_1\text{-mch}(L)}.
\end{equation}
We know for a path $L$, $P_1\text{-mch}(L)$ is equal to the number of east steps below subdiagonal $y=x-1$. By our observation in 
the introduction, $F_1(x,t) = F_6(x,t)$.

\fig{1.2}{P1}{An example of recurrence based on $P_1$.}

We split the analysis of $P_1\text{-mch}(L)$ into two cases. Case 1 is when $P_1\text{-mch}(L)=0$, that is, path $L$ stays above $y=x-1$. It is easy to see that the number of paths in $\mathcal L_n$ above $y=x-1$ is $C_{n+1}=\frac{1}{n+2}\binom{2n+2}{n+1}$, the $(n+1)^{th}$ Catalan number. Case 2 is when $P_1\text{-mch}(L)>0$, that is, path $L$ has at least one east step below $y=x-1$. 
Now consider the first time the path touches $y=x-1$ and the first time 
after that where the path hits a point $[i,i]$ on the diagonal. 
It is easy to see that the two steps preceding the point $[i,i]$ must 
be north steps. An example of recurrence is pictured in Figure \ref{fig:P1}, where two boxes are the two positions mentioned above and three diagonal dots stand for a whatever path follows the second box.

Suppose the position of the first box has coordinates $[j,j-1]$, $j\geq 1$, clearly there are $C_j$ ways to choose steps before reaching $[j,j-1]$. Similarly, suppose the position of the second box has coordinates $[i+j,i+j]$, $i\geq 1$, clearly there are $C_i$ ways to choose steps between $[j,j-1]$ and $[i+j,i+j]$.

Since the ordinary generating function for Catalan numbers is
\begin{equation}\label{eq:cat}
C(x)=\sum_{n\geq 0}C_nx^n=\frac{1-\sqrt{1-4x}}{2x}=1+x+2x^2+5x^3+14x^4+42x^5\cdots,
\end{equation}
it follows that 
\begin{eqnarray*}
	F_{1}(x,t)&=&\sum_{n\geq 0}C_{n+1}t^n+\sum_{i\geq 1}\sum_{j\geq 1}C_iC_jx^it^{i+j}F_1(x,t)\\
	&=&\frac{C(t)-1}{t}+\sum_{i\geq 1}C_i(xt)^i\sum_{j\geq 1}C_jt^{j}F_1(x,t)\\
	&=&\frac{C(t)-1}{t}+\sum_{i\geq 1}C_i(xt)^i(C(t)-1)F_1(x,t)\\
	&=&\frac{C(t)-1}{t}+(C(xt)-1)(C(t)-1)F_1(x,t)
\end{eqnarray*}
and therefore by Equation (\ref{eq:cat}),
$$
F_1(x,t)=\frac{C(t)-1}{t\left(1-(C(xt)-1)(C(t)-1)\right)}
=\frac{2 x}{x\sqrt{1-4 t} +\sqrt{1-4 xt }+x-1}.
$$
Some initial terms of $F_1(x,t)$ are
\begin{eqnarray*}
	F_1(x,t)&=&1+2 t+(x+5)t^2 + (2x^2+4 x+14)t^3
	+(5 x^3+9 x^2+14 x+42)t^4 \\
	&& +(14 x^4+24 x^3+34 x^2+48 x+132) t^5 \\
	&&+(42 x^5+70 x^4+95 x^3+123 x^2+165 x+429)t^6 +\cdots.
\end{eqnarray*}
The number of paths in $\mathcal L_n$ avoiding $P_1$ is $C_{n+1}$, $(n+1)^{th}$ Catalan number. In general, the number of paths having exactly $k$ $P_1$-matches has the generating function as follows,
$$
\frac{1}{k!}\left.\frac{\partial^k F_1(x,t)}{\partial x^k}\right|_{x=0}~~\text{ or }~~~~
\frac{1}{k!}\left.\frac{\partial^k F_1(x,t)}{\partial x^k}\right|_{x\rightarrow 0}.
$$
We evaluate the derivative at $x=0$ or when $x=0$ is a singularity of the derivative, we take the limit as $x$ approaches zero. For example, 
\begin{eqnarray*}
	\left.\frac{\partial F_1(x,t)}{\partial x}\right|_{x\rightarrow 0}&=&\left(\frac{-1+\sqrt{1-4t}+2t}{2t}\right)^2\\
	&=&t^2+4t^3+14t^4+48t^5+165t^6+572t^7+2002t^8+\cdots.
\end{eqnarray*}
The sequence $1,4,14,48,165,572,202, \ldots$ is sequence 
A002057 in the OEIS \cite{Slo}. It has a number of combinatorial 
interpretations including the number of standard tableaux of 
shape $(n+2,n-1)$ and, with an offset of 4, the number of 123-avoiding permutations on $\{1,2,\cdots,n\}$ for which the integer $n$ is in the fourth spot. 
It follows from the hook length formula for the number of standard 
tableaux that the number of paths $L$ in $\mathcal{L}_n$ 
with exactly one east below the subdiagonal $y=x-1$ equals 
$4((2n-1)!)/((n-2)!(n+2)!)$ and is equal to 
the number of 123-avoiding permutations on $\{1,2,\cdots,n+2\}$ for which the integer $n$ is in the fourth spot. 

Similarly, one can obtain the generating function for the number of paths having exactly two east steps below the subdiagonal as follows,
\begin{eqnarray*}
	\frac{1}{2!}\left.\frac{\partial^2 F_1(x,t)}{\partial x^2}\right|_{x\rightarrow 0}&=&-\frac{\left(-1+\sqrt{1-4t}-2t\right)\left(-1+\sqrt{1-4t}+2t\right)^2}{8t^2}\\
	&=&2t^3+9t^4+34t^5+123t^6+440t^7+1573t^8 + 5642t^9+ \cdots.
\end{eqnarray*}

The sequence $2,9,34,123,440,1573,5642,\cdots$ is sequence 
A120989 in the OEIS \cite{Slo}.  The $n^{th}$ term 
in this sequence counts the level of the first leaf in preorder of a binary tree, summed over all binary  trees with $n-2$ edges. Thus the number of 
paths $L$ in $\mathcal L_n$ with exactly two east steps below the subdiagonal 
$y=x-1$ equals the sum of the level of the first leaf in preorder over 
all binary trees with $n-2$ edges. We leave open the problem of  
giving a bijective proof of this fact. 

Next, we shall answer the following question, for a random path $L\in\mathcal L_n$, what is the expectation of $P_1\text{-mch}(L)$, or in other words, on average how many east steps of $L$ are below $y=x-1$?
Consider that
\begin{eqnarray}
\left.\frac{\partial F_1(x,t)}{\partial x}\right|_{x=1}&=&
-\frac{-1+\sqrt{1-4t}+2t}{2(1-4t)^{3/2}}\\
&=&t^2+8t^3+47t^4+244t^5+1186t^6+5536t^7+\cdots. \label{coeff:P1}
\end{eqnarray}
For example, a random $L\in\mathcal L_7$, expectation of $P_1\text{-mch}(L)$
$$
\mathbb{E}[P_1\text{-mch}(L):L \in \mathcal{L}_7]=\frac{5536}{\binom{14}{7}}\approx 1.63,
$$
which implies in average there are roughly 1.63 east steps below $y=x-1$.

In general, by the OEIS, the coefficient of $t^n$ in Equation (\ref{coeff:P1}) has formula $\frac{1}{2}((n+1)\binom{2n}{n}-4^n)$. Using Stirling's formula to approximate $n!$, we have
\begin{equation}\label{eq:AsyP1}
\mathbb{E}[P_1\text{-mch}(L):L \in \mathcal{L}_n]=
\frac{(n+1)\binom{2n}{n}-4^n}{2\binom{2n}{n}}\sim \frac{n+1}{2}-\sqrt{\pi n},
\end{equation}
which implies when $n$ is large, for a random path $L\in\mathcal L_n$, the expected number of east steps that lie below $y=x-1$ is $\frac{n+1}{2}-\sqrt{\pi n}$.

The sequence $1,8,47,244,1186,5536, \cdots $ from Equation (\ref{coeff:P1}) is sequence 
A029760 and A139262 in the OEIS \cite{Slo}. A029760 and A139262 count the total area under all the Dyck paths from $[0,0]$ to $[n,n]$, the total number of inversions in all 132-avoiding permutations of length $n$ and also total number of two-element anti-chains over all ordered trees on $n$ edges. Again we leave open the problem of finding a bijective proof of these facts. 
We suspect that finding a bijective proof is a challenge because Dyck paths, 132-avoiding permutations and ordered trees are all Catalan objects while lattice paths 
in $\mathcal{L}_n$ are not.

Next, by manipulating $F_1(x,t)$ we can also find the number of paths having even number many east steps below the subdiagonal $y=x-1$. The generating function is as follows,
\begin{eqnarray*}
	&&\frac{1}{2}\left(F_1(1,t)+F_1(-1,t)\right)\\
	&=&1+2t+5t^2+16t^3+51t^4+180t^5+622t^6+2288t ^7\cdots.
\end{eqnarray*}
Similarly, the generating function for the number of paths having odd number many east steps below the subdiagonal $y=x-1$ is 
\begin{eqnarray*}
	&&\frac{1}{2}\left(F_1(1,t)-F_1(-1,t)\right)\\
	&=&t^2+4t^3+19t^4+72t^5+302t^6+1144t^7+4643t^8\cdots.
\end{eqnarray*}

Neither of the series correspond to entries in the OEIS \cite{Slo}.

\subsection{Pattern $P_2$}\label{sec:P2}
For pattern $P_2$, $P_2\text{-mch}(L)$ counts the number of times $L$ bounces off the diagonal $y=x$ to the right, in other words, $P_2\text{-mch}(L)=\bounce^-(L)$. We shall study
\begin{equation}
F_2(x,t):=1+\sum_{n\geq 1}t^n\sum_{L\in \mathcal L_n}x^{P_2\text{-mch}(L)}.
\end{equation}
As we observed in the introduction, $F_2(x,t) =F_5(x,t)$. 

We shall consider two cases. Case 1 are the paths that start with an east step and Case 2 are the paths that start with a north step. 
We define 
\begin{equation*}
G_2(x,t):=\sum_{n\geq 1}t^n\sum_{L\in \mathcal L_n\text{ starting with }E}x^{P_2\text{-mch}(L)}
\end{equation*}
and
\begin{equation*}
H_2(x,t):=\sum_{n\geq 1}t^n\sum_{L\in \mathcal L_n\text{ starting with }N}x^{P_2\text{-mch}(L)}.
\end{equation*}
\fig{1.2}{P2}{An example of recurrence based on $P_2$.}
Clearly, $F_2(x,t)=1+G_2(x,t)+H_2(x,t)$. For $H_2(x,t)$, we consider where is the first time a path starting with a north step crosses the diagonal $y=x$ horizontally. In the middle diagram of Figure \ref{fig:P2},  the three dots stand for a path starting with `$E$' or an empty path.
\begin{equation}\label{eq:H_2}
H_2(x,t)=\sum_{j\geq 1}C_jt^j\left(G_2(x,t)+1\right)=(C(t)-1)(G_2(x,t)+1).
\end{equation}
Similarly, for $G_2(x,t)$, we consider where is the first time a path starting with an east step crosses the diagonal $y=x$ vertically. In the right diagram of Figure \ref{fig:P2}, three dots stand for a path starting with `$N$' or an empty path. Since we want to keep track of $P_2$-matches, here we need to introduce Catalan's triangle $C_{i,j}$, which is the number of Dyck paths in $\mathcal L_{2j}$ with $i$ returns to the diagonal \cite{Deu}. By \cite{Deu}, $C_{i,j}$ has generating function as follows, 
\begin{equation}\label{eq:C(x,t)}
C(x,t)=\sum_{i\geq 0}\sum_{j\geq 0}C_{i,j}x^it^j=1+\frac{1-\sqrt{1-4 t}}{(\sqrt{1-4 t}-1) x+2}
\end{equation}

Then
\begin{equation}
G_2(x,t)=\sum_{i\geq 0}\sum_{j\geq 1}C_{i,j}x^it^j\left(H_2(x,t)+1\right)=\frac{1-\sqrt{1-4 t}}{(\sqrt{1-4 t}-1) x+2}(H_2(x,t)+1).
\end{equation}
By Equation (\ref{eq:H_2}),
\begin{equation}
G_2(x,t)=\frac{1-\sqrt{1-4 t}}{(\sqrt{1-4 t}-1) x+2}((C(t)-1)(H_2(x,t)+1)+1).
\end{equation}
We can then solve $G_2(x,t)$ to obtain that 
\begin{eqnarray*}
	G_2(x,t)&=&\frac{\frac{1-\sqrt{1-4 t}}{(\sqrt{1-4 t}-1) x+2}C(t)}{1+\frac{1-\sqrt{1-4 t}}{(\sqrt{1-4 t}-1) x+2}-\frac{1-\sqrt{1-4 t}}{(\sqrt{1-4 t}-1) x+2}C(t)}\\
	&=&\frac{(\sqrt{1-4 t}-1)^2}{2 (\sqrt{1-4 t} (t (x-1)+1)-t (x-5)-1)}.
\end{eqnarray*}
Then we have
\begin{eqnarray*}
	F_2(x,t)&=&1+G_2(x,t)+H_2(x,t)\\
	&=&1+G_2(x,t)+(C(t)-1)(G_2(x,t)+1)\\
	&=&(G_2(x,t)+1)C(t)\\
	&=&\frac{-(\sqrt{1-4 t}-1) (\sqrt{1-4 t} x-\sqrt{1-4 t}-x+3)}{2 (\sqrt{1-4 t} xt-xt-\sqrt{1-4 t} t+5 t+\sqrt{1-4 t}-1)}.
\end{eqnarray*}

A few initial terms of $F_2(x,t)$ are
\begin{eqnarray*}
	F_2(x,t)&=&1+2t+(x+5)t^2+(x^2+4x+15)t^3+(x^3+5x^2+16x+48)t^4\\
	&&+(x^4+6x^3+23x^2+62x+160)t^5+\cdots.
\end{eqnarray*}
$F_2(0,t)$ is the generating function for the number of paths that do not bounce off the diagonal to the right. One can compute that 
\begin{eqnarray*}
	F_2(0,t)&=&\frac{2 (t+\sqrt{1-4 t}-1)}{(\sqrt{1-4 t}-5) t-\sqrt{1-4 t}+1}\\
	&=&1+2t+5t^2+15t^3+48t^4+160t^5+548t^6+1914t^7+ \cdots.
\end{eqnarray*}
The sequence $1,2,5,15,48, 160, 548, 1914 \cdots $ does not appear 
in the OEIS \cite{Slo}. 


Similarly, we can compute the generating 
function of  the number of paths that bounce at diagonal to right 
exactly one time. That is, 
\begin{eqnarray*}
	\left.\frac{\partial F_2(x,t)}{\partial x}\right|_{x= 0}&=&\left(\frac{-1+\sqrt{1-4t}+2t}{1-\sqrt{1-4t}+\left(-5+\sqrt{1-4t}\right)t}\right)^2\\
	&=&t^2+4t^3+16t^4+62t^5+238t^6+910t^7+\cdots.
\end{eqnarray*}
The sequence $1,4,16,62,238,910  \cdots $ does not appear 
in the OEIS \cite{Slo}.

Also we could ask, for a random path $L\in\mathcal L_n$, what is the expectation of $P_2\text{-mch}(L)$, or in other words, on average how many times do $L$ bounce at $y=x$ to right?
Consider that
\begin{eqnarray}
\left.\frac{\partial F_2(x,t)}{\partial x}\right|_{x=1}&=&
\left(\frac{-1+\sqrt{1-4t}+2t}{-1+\sqrt{1-4t}+4t}\right)^2\\
&=&t^2+6t^3+29t^4+130t^5+562t^6+2380t^7+\cdots. \label{coeff:P2}
\end{eqnarray}
Coefficient of $t^n$ in Equation (\ref{coeff:P2}) agrees with 
sequence A008549 of the OEIS \cite{Slo} which counts the total area of all the Dyck excursions of length $2n-2$. By OEIS \cite{Slo}, the coefficient of $t^n$ is given by the 
 formula $4^{n-1}-\binom{2n-1}{n-1}$. Using Stirling's formula to approximate $n!$, one 
finds that 
$$
\mathbb{E}[P_2\text{-mch}(L):L\in\mathcal L_n]=
\frac{\sum_{L \in \mathcal{L}_n} \bounce^-(L)}{|\mathcal L_n|}=\frac{4^{n-1}-\binom{2n-1}{n-1}}{\binom{2n}{n}} \sim
\frac{\sqrt{\pi n}}4 - \frac{1}{2} \approx 0.443 \sqrt{n},
$$
which implies when $n$ is large, the expected number of times a random path $L\in\mathcal L_n$ bounces off the diagonal to the right is roughly $0.443\sqrt{n}$.

Next, by manipulating $F_2(x,t)$ we can also find the number of paths having even number of bounces off the diagonal to the right. The generating function is as follows,
\begin{eqnarray*}
	&&\frac{1}{2}\left(F_2(1,t)+F_2(-1,t)\right)\\
	&=&1+2t+5t^2+16t^3+53t^4+184t^5+654t^6+2368t ^7\cdots.
\end{eqnarray*}
Similarly, the generating function for the number of paths having odd number 
of bounces off the diagonal to the right is  
\begin{eqnarray*}
	&&\frac{1}{2}\left(F_2(1,t)-F_2(-1,t)\right)\\
	&=&t^2+4t^3+17t^4+68t^5+270t^6+1064t^7+4181t^8\cdots.
\end{eqnarray*}
Again, neither of the series correspond to sequences in the 
OEIS \cite{Slo}.

\subsection{Pattern $P_3$}\label{sec:bije}
For pattern $P_3$, as discussed in Section 2, $P_3\text{-mch}(L)$ counts the number of times $L$ crosses the diagonal $y=x$ horizontally. We shall study
\begin{equation}
F_{3}(x,t):=1+\sum_{n\geq 1}t^n\sum_{L\in \mathcal L_n}x^{P_3\text{-mch}(L)}.
\end{equation}
By our observation in the introduction $F_3(x,t) = F_4(x,t)$.

Similar to the discussion of $P_2$, we consider two cases. Case 1 are the paths that start with a north step and Case 2 are the paths that start with an east step. We define 
\begin{equation*}
G_3(x,t):=\sum_{n\geq 1}t^n\sum_{L\in \mathcal L_n\text{ starting with }E}x^{P_3\text{-mch}(L)}
\end{equation*}
and
\begin{equation*}
H_3(x,t):=\sum_{n\geq 1}t^n\sum_{L\in \mathcal L_n\text{ starting with }N}x^{P_3\text{-mch}(L)}.
\end{equation*}
Clearly, $F_{3}(x,t)=1+G_3(x,t)+H_3(x,t)$. Essentially, the way we shall decompose the paths in this case 
is the same as how we decomposed the paths for pattern $P_2$. For paths starting with a north step, we consider where is the first 
the path crosses the diagonal $y=x$ from left to right and then it is followed by a path starting with an east step or an empty path. Then
\begin{equation}\label{eq:H_3}
H_3(x,t)=\sum_{j\geq 1}C_jt^j\left(xG_3(x,t)+1\right)=(C(t)-1)(xG_3(x,t)+1)
\end{equation}
Similarly, for paths starting with an east step , we consider where is the first time that the path crosses the diagonal  vertically. Then
\begin{equation}
G_3(x,t)=\sum_{j\geq 1}C_{j}t^j\left(H_3(x,t)+1\right)=(C(t)-1)(H_3(x,t)+1).
\end{equation}
Then by Equation (\ref{eq:H_3}),
$$
H_3(x,t)=\left(C(t)-1\right)\left(x(C(t)-1)(H_3(x,t)+1)+1\right)
$$
and we solve the formula above for $H_3(x,t)$
\begin{eqnarray*}
	H_3(x,t)&=& \frac{(1-C(t)) (x(C(t)-1)+1)}{x(C(t)-1)^2-1}\\	
	&=&-
	\frac{\left(2 t+\sqrt{1-4 t}-1\right) \left(2 t (x-1)+(\sqrt{1-4 t}-1) x\right)}{2 \left(2 t^2 (x-1)+2 \left(\sqrt{1-4 t}-2\right) t x-\sqrt{1-4 t} x+x\right)}
\end{eqnarray*}

Therefore,
\begin{eqnarray*}
	F_3(x,t)&=&1+G_{3}(x,t)+H_{3}(x,t)\\
	&=&1+C(t)H_3(x,t)+C(t)-H_3(x,t)-1+h_N(x,t)\\
	&=&(H_3(x,t)+1)C(t)\\
	&=&\frac{2}{(2 t (x-1)+(\sqrt{1-4 t}-1) x+\sqrt{1-4 t}+1)}
\end{eqnarray*}

A few initial terms of $F_3(x,t)$ are
\begin{eqnarray*}
	&&F_3(x,t)\\
	&=&1+2t+(x+5)t^2+(6x+14)t^3+(x^2+27x+42)t^4+(10x^2+110x+132)t^5+\cdots.
\end{eqnarray*}
Next, we shall find the generating function of 
the number of paths crossing the diagonal horizontally 
exactly once.
\begin{eqnarray*}
	\left.\frac{\partial F_3(x,t)}{\partial x}\right|_{x= 0}&=&-\frac{2\left(-1+\sqrt{1-4t}+2t\right)}{\left(1+\sqrt{1-4t}-2t\right)^2}\\
	&=&t^2+6t^3+27t^4+110t^5+429t^6+1638t^7+\cdots,
\end{eqnarray*}
The sequence $1,6,27,110,429,1638,\cdots$ is sequence A003517 on OEIS \cite{Slo}.
This sequence has several combinatorial interpretations such 
as the number of standard tableaux of shape $(n+3,n-2)$ and 
the number of permutations of $\{1, \ldots, n+1\}$ with exactly 
one increasing subsequence of length 3. It follows from 
the hook length formula for the number of standard tableaux that 
the number of paths $L$ in $\mathcal{L}_n$ with exactly one horizontal 
crossing equal $6((2n+1)!)/(n-2)!(n+4)!)$.

Similarly, the number of paths $L$ in $\mathcal{L}_n$ with exactly 
2 horizontal crossings has the following generating function:
\begin{eqnarray*}
	\frac{1}{2!}\left.\frac{\partial^2 F_3(x,t)}{\partial x^2}\right|_{x\rightarrow 0}&=&\frac{4\left(-1+\sqrt{1-4t}+2t\right)}{\left(1+\sqrt{1-4t}-2t\right)^2}\\
	&=&t^4+10t^5+65t^6+350t^7+1700t^8+7752t^9\cdots,
\end{eqnarray*}
The sequence $1,10,65,350,1700, \cdots$ is sequence A003519 on OEIS \cite{Slo}. 
It counts the number of standard tableaux of shape $(n-5,n-4)$ 
from which it follows that the 
number of paths $L$ in $\mathcal{L}_n$ with exactly 
2 horizontal crossings equals $\frac{10}{n+6}\binom{2n+1}{n-4}$.

Also we could ask, for a random path $L\in \mathcal{L}_n$, what is the expectation of $P_3\text{-mch}(L)$, or in other words, on average how many times does $L$ cross $y=x$ from left to right? In this case, we have  computed  
that 
\begin{eqnarray*}
	\frac{\partial}{\partial x} F_3(x,t)|_{x =1} &=&   
	\frac{-1=\sqrt{1-4t}+2t}{-2+8t} \\
	&=& t^2 +6t^3 + 29t^4 +130 t^5 +562 t^6 +2880t^7 + 9949t^8 + \cdots\\
	&=& 
		\frac{\partial}{\partial x} F_2(x,t)|_{x =1},
\end{eqnarray*}
which means  the total number of $P_3$-matches in paths in $\mathcal{L}_n$
is equal to the total number of $P_2$-matches  paths in $\mathcal{L}_n$.

Next we give a bijection that shows this fact. Since the total number of $P_3$-matches 
in paths in $\mathcal{L}_n$  is half of total $\{P_3,P_4\}$-matches in paths in $\mathcal{L}_n$
and the total number of $P_2$-matches in paths in $\mathcal{L}_n$  is half of total $\{P_2,P_5\}$-matches in paths in $\mathcal{L}_n$, we only need to show that the total number of $\{P_2,P_5\}$-matches in paths in $\mathcal{L}_n$ is equal to the total number of 
$\{P_3,P_4\}$-matches in paths in $\mathcal{L}_n$. In other words, we only need to show that the total number of times that all the paths in $\mathcal{L}_n$ bounce off the diagonal is equal to the total number of times that all the paths in $\mathcal{L}_n$ cross the diagonal.

By the reflection principle, we can construct a bijection between the set of paths in $\mathcal L_n$ crossing the diagonal $k$ times, denoted by $\mathcal C_{n,k}$ and the set of paths in $\mathcal L_n$ bouncing off the diagonal $k$ times, denoted by $\mathcal B_{n,k}$.  The procedure of the bijection is as follows. For any path $L\in\mathcal C_{n,k}$, $L$ crosses the diagonal $k$ times and suppose $L$ touches the diagonal $j$ times at positions $\{p_1,p_2,\cdots,p_j\}$, $j\geq k$.  First we retain the part between $[0,0]$ and $p_1$ of the path and
 flip the path between $p_1$ and $[n,n]$ along the diagonal, then we can get a new path $L_1$. At the second step, we retain the part between $[0,0]$ and $p_2$ of the path $L_1$ and flip the part between $p_2$ and $[n,n]$ along the diagonal, then we can get a new path $L_2$. We repeat the process above until we acquire $L_j$. $L_j$ is a path in $\mathcal B_{n,k}$ because  the procedure above transforms a crossing of $L$ into a bouncing of $L_j$ and  a bouncing of $L$ into a crossing of $L_j$. An example is pictured in Figure \ref{fig:Reflect}. $L\in\mathcal C_{5,2}$ is mapped to $L_3\in\mathcal B_{5,2}$ under the bijection.

\fig{0.9}{Reflect}{$L$ is mapped to $L_3$ by the bijection.}
Therefore,
$$
\mathbb{E}[P_3 \text{-mch}(L)]=\mathbb{E}[P_2 \text{-mch}(L)]\sim \frac{\sqrt{n\pi}}{4}-\frac{1}{2}\approx 0.443\sqrt{n}.
$$

Next, by manipulating $F_3(x,t)$ we can also find the number of paths having even number many  horizontal crossings. The generating function is as follows,
\begin{eqnarray*}
	&&\frac{1}{2}\left(F_3(1,t)+F_3(-1,t)\right)\\
	&=&1+2t+5t^2+14t^3+43t^4+142t^5+494t^6+1780t ^7\cdots,
\end{eqnarray*}
The sequence $1,2,5,14,43,142,494, \ldots $ is sequence 
A005317 in the OEIS \cite{Slo} where no combinatorial interpretation is given. Thus we 
have given a combinatorial interpretation to this sequence. 

Similarly, the generating function for the number of paths having odd number many  horizontal crossings is 
\begin{eqnarray*}
	&&\frac{1}{2}\left(F_3(1,t)-F_3(-1,t)\right)\\
	&=&t^2+6t^3+27t^4+110t^5+430t^6+1652t^7+6307t^8\cdots,
\end{eqnarray*}
in which coefficient of $t^n$ also counts number of unordered pairs of distinct length-$n$ binary words having the same number of $1$'s according to A108958 in the OEIS \cite{Slo}. We leave open the problem of giving a bijective proof of this fact.

\section{Multivariate generating functions}
In this section, we shall study multivariate generating functions for $\Delta$-matches for certain 
$\Delta \subseteq \{P_1, \ldots, P_6\}$. Our choices for the $\Delta$ that we consider are motivated by picking 
those pattern matching conditions which have the clearest geometric interpretations.  
Let 
$$
F_{\Delta}(\mathbf{x},t):=1+\sum_{n\geq 1}t^n\sum_{L\in\mathcal L_n}\left(\prod_{j\in \Delta} x_j^{P_j\text{-mch}(L)}\right),
$$
where $\Delta$ is a subset of $\{1,2,3,4,5,6\}$.
We start by looking at the two elements sets that have symmetry, namely, $\Delta = \{1,6\}$, $\Delta = \{2,5\}$, and 
$\Delta = \{3,4\}$.

\subsection{$P_1$ and $P_6$}
Pattern $P_1$ has one east step below $y=x-1$ and $P_6$ has one east step above $y=x+1$. We know that for a path $L\in\mathcal L_n$, $P_1\text{-mch}(L)$ and $P_6\text{-mch}(L)$ are the numbers of east steps below $y=x-1$ and above $y=x+1$, respectively.

\fig{1.3}{P1P6}{Pattern $P_1$ and $P_6$.}

In this subsection, we shall consider the multivariate generating function
$$
F_{1,6}(x_1,x_6,t):=1+\sum_{n\geq 1}t^n\sum_{L\in\mathcal L_n}x_1^{P_1\text{-mch}(L)}x_6^{P_6\text{-mch}(L)}.
$$

We use essentially the same ideas as in Section \ref{sec:P1} to decompose  the paths in $\mathcal{L}_n$ to obtain recurrences 
that will allow us to compute $F_{1,6}(x_1,x_6,t)$.
In this case, we take three cases into account. Case 1 are the paths that have no $P_1$-match or $P_6$-match. In addition, we can see paths avoiding $P_1$ and $P_6$ must stay between $y=x-1$ and $y=x+1$. It is easy to see that if the word of such 
as path is $u_1 \ldots u_{2n}$, then either $u_{2i-1}u_{2i} = EN$ or $u_{2i-1}u_{2i} = NE$ for all $i$.
Thus the number of paths in $\mathcal L_n$ bounded by $y=x-1$ and $y=x+1$ is $2^n$. Case 2 are the paths $L$ such 
that the first pattern matching of either $P_1$ or $P_6$ in path $L$ is $P_1$ and  Case 3 are the paths $L$ such that the first pattern matching of either $P_1$ or $P_6$ in $L$ is $P_6$.
Then we have
\begin{eqnarray*}
	F_{1,6}(x_1,x_6,t)&=&\sum_{n\geq 0}2^nt^n+\sum_{i\geq 1}\sum_{j\geq 1}\left(C_i2^{j-1}x_1^it^{i+j}+C_iC2^{j-1}x_6^it^{i+j}\right)F_{1,6}(x_1,x_6,t)\\
	&=&\frac{1}{1-2t}+\frac{t}{1-2t}(C(x_1t)+C(x_6t)-2)F_{1,6}(x_1,x_6,t).
\end{eqnarray*}
Then solving above equation for $F_{1,6}$, we have
\begin{eqnarray*}
	F_{1,6}(x_1,x_6,t)&=&\frac{2x_1x_6}{\left(-1+\sqrt{1-4x_1 t}\right)x_6+\left(-1+\sqrt{1-4x_6 t}\right)x_1+2x_1x_6}\\
	&=&1+2t+(x_1+x_6+4)t^2+(2x_1^2+4x_1+2x_6^2+4x_6+8)t^3\\
	&&+(5x_1^3+9x_1^2+12x_1+5x_6^3+9x_6^2+12x_6+2x_1x_6+16)t^4+\cdots.
\end{eqnarray*}
Clearly, $F_{1,6}(x,1,t)=F_{1,6}(1,x,t)=F_{1}(x,t)$. Next, we discuss coefficients of $x_1t^n$ and $x_1x_6t^n$ 
in $F_{1,6}(x_1,x_6,t)$ which count the number of paths in $\mathcal L_n$ having exactly one $P_1$ pattern and avoiding $P_6$ and the number of paths in $\mathcal L_n$ having exactly one $P_1$ and exactly one $P_6$. 
In general, the generating function for coefficients of $x_1^jx_6^k$ is
\begin{equation}\label{obtain coef}
\left.\frac{1}{j!k!}\frac{\partial^{j+k} F_{1,6}(x_1,x_6,t)}{\partial x_1^j\partial x_6^k}\right|_{x_1=0,x_6=0},
\end{equation}
where if the derivative cannot be evaluated at zero, we take the limit.

By the symmetry of $P_1$ and $P_6$, the coefficient of $x_1t^n$ in $F_{1,6}(x_1,x_6,t)$ equals the coefficient of $x_6t^n$ 
in $F_{1,6}(x_1,x_6,t)$. By Equation (\ref{obtain coef}), the generating function for the coefficients of $x_1t^n$ in $F_{1,6}(x_1,x_6,t)$
equals 
\begin{eqnarray*}
\frac{t^2}{(1-2t)^2}=t^2+4t^3+12t^4+32t^5+80t^6+192t^7+448t^8\cdots.
\end{eqnarray*}
The sequence $1, 4, 12, 32, 80, 192,\cdots$ is A001787 in the OEIS \cite{Slo}. The $n^{th}$ term of this sequence is $n2^{n-1}$ which 
means that the number of paths $L \in \mathcal{L}_n$ with exactly one east step below the subdiagonal $y=x-1$ and 
no east step above the superdiagonal $y=x+1$ equals $(n-1)2^{n-2}$. 
This is easy to prove directly.  That is, if $L$ is such a path, the one east that occurs below the subdiagonal $y =x-1$ 
must arise by starting at a point $[a,a]$ on the diagonal where $0 \leq a \leq n-2$ followed by a sequence $EENN$. If 
we remove this sequence from the word of $L$, we will end up with the word $u_1 \ldots u_{2n-4}$ of path $L' \in \mathcal{L}_{n-2}$ which 
has no east steps either below the subdiagonal $y = x-1$ or above the superdiagonal $y =x+1$.  It is easy to see that in 
such a path $L'$ either $u_{2i-1}u_{2i} = EN$ or $u_{2i-1}u_{2i} = NE$ for all $i$. Hence there are $2^{n-2}$ such paths $L'$ so 
that the number of paths $L \in \mathcal{L}_n$ with exactly one east step below the subdiagonal $y=x-1$ and 
no east step above the superdiagonal $y=x+1$ equals $(n-1)2^{n-2}$.

The generating function of the coefficients of $x_1x_6t^n$ in $F_{1,6}(x_1,x_6,t)$ equals 
\begin{eqnarray*}
\frac{2t^4}{(1-2t)^3}=2t^4+12t^5+48t^6+160t^7+480t^8+1344t^9 + \cdots.
\end{eqnarray*}
The sequence $2,12,48,160,480, \cdots$ is sequence A001815 in the OEIS \cite{Slo}.  We can show directly that 
the number of paths $L \in \mathcal{L}_n$ with exactly one east step below the subdiagonal $y=x-1$ and 
exactly step above the superdiagonal $y=x+1$ equals $(n-2)(n-3) 2^{n-4}$. 
That is, if $L$ is such a path, then the  one east that occurs below the subdiagonal $y =x-1$ 
must arise by starting at a point $[a,a]$ on the diagonal where $0 \leq a \leq n-2$ followed by a sequence $EENN$ 
and the  one east that occurs above  the subdiagonal $y =x+1$ 
must arise by starting at a point $[b,b]$ on the diagonal where $0 \leq a \leq n-2$ followed by a sequence $NNEE$. 
We have $n-1$ choices for the point $[a,a]$. But these $n-1$ choices lead to different situations according to different values of $a$. If $a=0$ or $a=n-2$, we have $n-3$ choices to choose a point $[b,b]$ on the diagonal followed by a sequence $NNEE$. If $0<a<n-2$, there are  $n-4$ choices to choose a point $[b,b]$ on the diagonal followed by a sequence $NNEE$. So the total ways to choose positions of one $P_1$-match and one $P_6$-match is equal to $2(n-3)+(n-3)(n-4)=(n-2)(n-3)$.
We remove  sequence $EENN$ and $NNEE$ from the word of $L$, we will end up with the word $u_1 \ldots u_{2n-8}$ of path $L' \in \mathcal{L}_{n-4}$
which has no east steps either below the subdiagonal $y = x-1$ or above the superdiagonal $y =x+1$.  Hence there are $2^{n-4}$ such paths 
$L'$ so 
that the number of path $L \in \mathcal{L}_n$ with exactly one east step below the subdiagonal $y=x-1$ and 
exactly on east step above the superdiaganal $y=x+1$ equals $(n-2)(n-3) 2^{n-4}$.

If we  are interested in counting lattice paths by the number of east steps below $y=x-1$ or above $y=x+1$, 
then we consider the following generating function: 
\begin{equation*}
	F_{1,6}(x,x,t)=\frac{x}{-1+x+\sqrt{1-4xt}}.
\end{equation*}
And also clearly,
$$
\left.\frac{\partial F_{1,6}(x,x,t)}{\partial x}\right|_{x=1}=2
\left.\frac{\partial F_{1}(x,t)}{\partial x}\right|_{x=1}
$$
because the symmetry of $P_1$ and $P_6$. Then by Equation (\ref{eq:AsyP1}), 
$$
\mathbb{E}[\{P_1,P_6\}\text{-mch}(L):L\in\mathcal L_n]
=2\mathbb{E}[P_1\text{-mch}(L):L\in\mathcal L_n]\sim n+1-2\sqrt{\pi n}.
$$

Next, by manipulating $F_{1,6}(x_1,x_6,t)$ we can also find the number of paths having even number many east steps below $y=x-1$ or above $y=x+1$. The generating function equals 
\begin{eqnarray}
	&&\frac{1}{2}\left(F_{1,6}(1,1,t)+F_{1,6}(-1,-1,t)\right)\\
	&=&1+2t+4t^2+12t^3+36t^4+132t^5+456t^6+1752t ^7\cdots. \label{eq:P16even}
\end{eqnarray}

Similarly, the generating function for the number of paths having odd number many east steps below the subdiagonal $y=x-1$ or above $y=x+1$ is 
\begin{eqnarray}
	&&\frac{1}{2}\left(F_{1,6}(1,1,t)-F_{1,6}(-1,-1,t)\right)\\
	&=&2t^2+8t^3+34t^4+120t^5+468t^6+1680t^7+6530t^8\cdots. \label{eq:P16odd}
\end{eqnarray}
Neither of the two sequences above is recorded in the OEIS \cite{Slo}.

\subsection{$P_2$ and $P_5$}\label{sec:P2P5}
In this subsection, we  shall study
$$
F_{2,5}(x_2,x_5,t):=1+\sum_{n\geq 1}t^n\sum_{L\in\mathcal L_n}x_2^{P_2\text{-mch}(L)}x_5^{P_5\text{-mch}(L)}.
$$
\fig{1.3}{P2P5}{Pattern $P_2$ and $P_5$.}
$F_{2,5}(x_2,x_5,t)$ is the generating function 
which keeps track of the number of lattice paths by the number of times it bounces off the diagonal to the right or to the left. By 
the symmetry induced by reflecting paths about the diagonal discussed in the introduction, it is easy 
to see that $F_{2,5}(x_2,x_5,t)$ is a symmetric function in $x_2$ and $x_5$. It is also clear that $F_{2,5}(x,x,t)$ 
is the generating function which counts number of times a lattice path in $\mathcal{L}_n$ bounces off the diagonal $y=x$.

First we define 
\begin{equation*}
G_{2,5}(x_2,x_5,t):=\sum_{n\geq 1}t^n\sum_{L\in \mathcal L_n\text{ starting with }E}x_2^{P_2\text{-mch}(L)}x_5^{P_5\text{-mch}(L)}t^n
\end{equation*}
and
\begin{equation*}
H_{2,5}(x_2,x_5,t):=\sum_{n\geq 1}t^n\sum_{L\in \mathcal L_n\text{ starting with }N}x_2^{P_2\text{-mch}(L)}x_5^{P_5\text{-mch}(L)}t^n.
\end{equation*}
Clearly,
$$
F_{2,5}(x_2,x_5,t)=1+G_{2,5}(x_2,x_5,t)+H_{2,5}(x_2,x_5,t).
$$
Here we employ the decomposition of paths used in Section \ref{sec:P2}, then we have
\begin{eqnarray*}
G_{2,5}(x_2,x_5,t)&=&\sum_{i\geq 0}\sum_{j\geq 1}C_{i,j}x_2^it^j\left(H_{2,5}(x_2,x_5,t)+1\right)\\
&=&(C(x_2,t)-1)(H_{2,5}(x_2,x_5,t)+1)
\end{eqnarray*}
and
\begin{eqnarray*}
	H_{2,5}(x_2,x_5,t)&=&\sum_{i\geq 0}\sum_{j\geq 1}C_{i,j}x_5^it^j\left(G_{2,5}(x_2,x_5,t)+1\right)\\
	&=&(C(x_5,t)-1)(G_{2,5}(x_2,x_5,t)+1),
\end{eqnarray*}
where $C(x,t)$ is given as Equation (\ref{eq:C(x,t)}).  
Then
$$
G_{2,5}(x_2,x_5,t)=\left(C(x_2,t)-1\right)\left(\left(C(x_5,t)-1\right)(G_{2,5}(x_2,x_5,t)+1)+1\right).
$$
Solving the above formula for $G_{2,5}$ we have,
\begin{eqnarray*}
	G_{2,5}(x_2,x_5,t)&=&-\frac{(C(x_2,t)-1)C(x_5,t)}{C(x_2,t)(C(x_5,t)-1)-C(x_5,t)}\\
	&=&-\frac{2(1-x_5)t+(x_5-2)\left(1-\sqrt{1-4t}\right)}{1+\sqrt{1-4t}+x_2(x_5-1)\left(1-\sqrt{1-4t}\right)-x_5+\sqrt{1-4t}x_5+2(1-x_2x_5)t}.
\end{eqnarray*}
Therefore,
\begin{eqnarray*}
	F_{2,5}(x_2,x_5,t)&=&1+G_{2,5}(x_2,x_5,t)+H_{2,5}(x_2,x_5,t)\\
	&=&1+G_{2,5}(x_2,x_5,t)+(C(x_5,t)-1)(G_{2,5}(x_2,x_5,t)+1)\\
	&=&C(x_5,t)(G_{2,5}(x_2,x_5,t)+1)\\
	&=&\left(1+\frac{1-\sqrt{1-4t}}{2-x_5\left(1-\sqrt{1-4t}\right)}\right)\cdot \\
	&&\left(1-\frac{2(1-x_5)t+(x_5-2)\left(1-\sqrt{1-4t}\right)}{1+\sqrt{1-4t}+x_2(x_5-1)\left(1-\sqrt{1-4t}\right)-x_5+\sqrt{1-4t}x_5+2(1-x_2x_5)t}\right).
\end{eqnarray*}
A few initial terms of $F_{2,5}(x_2,x_5,t)$ are
\begin{eqnarray*}
	&&F_{2,5}(x_2,x_5,t)\\
	&=&1+2t+(x_2+x_5+4)t^2+(x_2^2+4x_2+x_5^2+4x_5+10)t^3\\
	&&+(x_2^3+5x_2^2+14x_2+x_5^3+5x_5^2+14x_5+2x_2x_5+28)t^4\\
	&&+	(x_2^4+6x_2^3+21x_2^2+48x_2+x_5^4+6x_5^3+21x_5^2+48x_5+2x_2^2x_5+2x_2x_5^2+12x_2x_5+84)t^5\\
	&&+\cdots.
\end{eqnarray*}
By Equation (\ref{obtain coef}), we can obtain the generating functions of the coefficients of $x_2t^n$ in $F_{2,5}(x_2,x_5,t)$ 
which equals 
\begin{eqnarray*}
	\left.\frac{\partial F_{2,5}(x_2,0,t)}{\partial x_2}\right|_{x_2=0}&=&
	\frac{1-\sqrt{1-4t}+2t(-2+\sqrt{1-4t}+t)}{2t^2}\\
	&=&t^2+4t^3+14t^4+48t^5+165t^6+572t^7+7072t^8+\cdots\\
	&=&	\left.\frac{\partial F_{1}(x,t)}{\partial x}\right|_{x\rightarrow 0}.
\end{eqnarray*}
This implies there exists a bijection between paths having exactly one $P_2$-match but no $P_5$-matches and 
paths having exactly one step below $y=x-1$. We leave this as an open problem. 

Similarly, we can get coefficients of $x_2x_5t^n$,
\begin{eqnarray*}
	\left.\frac{\partial^2 F_{2,5}(x_2,x_5,t)}{\partial x_2\partial x_5}\right|_{x_2=x_5=0}
	&=&2t^4+12t^5+56t^6+236t^7+948t^8+3712t^9\cdots.
\end{eqnarray*}
The sequence $2,12,56,236,948,\cdots$ is not in the OEIS \cite{Slo}.

It is also the case that $F_{2,5}(1,x,t)=F_{5}(x,t)=F_2(x,t)=F_{2,5}(x,1,t)$ and
\begin{eqnarray*}
	F_{2,5}(0,0,t)&=&1+2t+4t^2+10t^3+28t^4+84t^5+264t^6\cdots\\
	&=&1+2C_1t+2C_2t^2+2C_3t^3+2C_4t^4+2C_5t^5+\cdots,
\end{eqnarray*}
where $C_k$ is the $k^{th}$ Catalan number.
$F_{2,5}(x,x,t)$ is the generating function over paths $L$ in $\mathcal{L}_n$ by the number of times $L$ bounces off the diagonal.

\begin{eqnarray*}
	F_{2,5}(x,x,t)&=&\frac{1-\sqrt{1-4t}-t-x+x^2t}{-x+(1+x^2)t}\\
	&=&1+2t+2(x+2)t+2(x+2)t^2+2(x^2+4x+5)t^3\\
	&&+2(x^3+6x^2+14x+14)t^4+2(x^4+8x^3+27x^2+48x+42)t^5\cdots.
\end{eqnarray*}
We take partial derivative of $F_{2,5}(x,x,t)$ with respect $x$ and evaluate at $x=1$,
\begin{eqnarray*}
	\left.\frac{\partial F_{2,5}(x,x,t)}{\partial x}\right|_{x=1}&=&\frac{\sqrt{1-4t}}{-1+4t}-\frac{1-2t}{-1+4t}\\
	&=&\sum_{n\geq 2}\left(\frac{4^n}{2}-2\binom{2n-1}{n-1}\right)t^n\\
	&=&2t^2+12t^3+58t^4+260t^5+1124t^6+4760t^6+19898t^8+\cdots\\
	&=&2\left.\frac{\partial F_{2}(x,t)}{\partial x}\right|_{x=1}.
\end{eqnarray*}
It follows that 
$$
\mathbb{E}[\{P_2,P_5\}\text{-mch}(L):L \in \mathcal{L}_n]=2\mathbb{E}[P_2\text{-mch}(L):L \in \mathcal{L}_n]\approx \frac{\sqrt{\pi n}}{2}-1\approx 0.886\sqrt{n}
$$
gives the expected number of times a path in $\mathcal L_n$ bounces off the diagonal.

$\frac{1}{2}(F_{2,5}(1,1,t)+F_{2,5}(-1,-1,t))$ is the generating function of 
the number of lattice paths in $\mathcal{L}_n$ that bounce off the diagonal an even number of times. We have computed that 
\begin{eqnarray*}
	&&\frac{1}{2}\left(F_{2,5}(1,1,t)+F_{2,5}(-1,-1,t)\right)\\
	&=&\frac{1-\sqrt{1-4t}+\left(-6+4\sqrt{1-4t}\right)t+4t^2}{1-\sqrt{1-4t}+\left(-4+2\sqrt{1-4t}\right)t}\\
	&=&1+2\sum_{n\geq 1}\binom{2n-2}{n-1}t^n\\
	&=&1+2t+4t^2+12t^3+40t^4+140t^5+504t^6+\cdots.
\end{eqnarray*}
The sequence $2,4,12,40,140,\cdots$ is sequence A028329 in the OEIS \cite{Slo}. 
It would 
be nice to have a direct combinatorial proof that the 
number of lattice paths in $\mathcal{L}_n$ that bounce off the diagonal an even number of times equals $2\binom{2n-2}{n-1}$.

$\frac{1}{2}(F_{2,5}(1,1,t)-F_{2,5}(-1,-1,t))$ is the generating function of the 
number of lattice paths $L$ in $\mathcal{L}_n$  that bounce off the diagonal an odd number of times. We have computed that 
\begin{eqnarray*}
	&&\frac{1}{2}\left(F_{2,5}(1,1,t)-F_{2,5}(-1,-1,t)\right)\\
	&=&2\sum_{n\geq 2}\binom{2n-2}{n-2}t^n\\
	&=&2t^2+8t^3+30t^4+112t^5+420t^6+1584t^7\cdots.
\end{eqnarray*}
The sequence $2, 8, 30, 112, 420, 1584,\cdots$ is sequence A162551 in the OEIS \cite{Slo}.
It would 
be nice to have a direct combinatorial proof of that the 
number of lattice paths in $\mathcal{L}_n$ that bounce off the diagonal an odd number of times equal $2\binom{2n-2}{n-2}$.

\subsection{$P_3$ and $P_4$}
We define
\begin{equation}
F_{3,4}(x_3,x_4,t):=1+\sum_{n\geq 1}t^n\sum_{L\in \mathcal L_n}x_3^{P_3\text{-mch}(L)}x_4^{P_4\text{-mch}(L)},
\end{equation}
where $x_3$ is used to keep track of the number of horizontal crossings and $x_4$ is used to keep track of the number of vertical crossings. Clearly, $F_{3,4}(x_3,x_4,t)$ is symmetric in $x_3$ and $x_4$.

\fig{1.3}{P3P4}{Pattern $P_3$ and $P_4$.} 
We also define 
\begin{equation*}
G_{3,4}(x_3,x_4,t):=\sum_{n\geq 1}t^n\sum_{L\in \mathcal L_n\text{ starting with }E}x_3^{P_3\text{-mch}(L)}x_4^{P_4\text{-mch}(L)}
\end{equation*}
and
\begin{equation*}
H_{3,4}(x_3,x_4,t):=\sum_{n\geq 1}t^n\sum_{L\in \mathcal L_n\text{ starting with }N}x_3^{P_3\text{-mch}(L)}x_4^{P_4\text{-mch}(L)}.
\end{equation*}
We employ the same decomposition of paths used in Section \ref{sec:P2P5} for $P_3$ and $P_4$. Then
\begin{equation*}
H_{3,4}(x_3,x_4,t)=\sum_{j\geq 1}C_jt^j\left(x_3G_{3,4}(x_3,x_4,t)+1\right)=(C(t)-1)(x_3G_{3,4}(x_3,x_4,t)+1)
\end{equation*}
and
\begin{equation*}
G_{3,4}(x_3,x_4,t)=\sum_{j\geq 1}C_{j}t^j\left(x_4H_{3,4}(x_3,x_4,t)+1\right)=(C(t)-1)(x_4H_{3,4}(x_3,x_4,t)+1).
\end{equation*}
Combining the two equations above, we can then solve for $G_{3,4}$ to obtain that 
\begin{eqnarray*}
G_{3,4}(x_3,x_4,t)&=&\frac{(1-C(t))((x_4C(t)-1)+1)}{x_3x_4(C(t)-1)^2-1}\\
&=&-\frac{\left(-1+\sqrt{1-4t}+2t\right)\left(2t(1-x_4)+\left(-1+\sqrt{1-4t}\right)x_4\right)}{-2(-1+\sqrt{1-4t})x_3x_4+4(-2+\sqrt{1-4t})x_3x_4t+4(x_3x_4-1)t^2}.
\end{eqnarray*}
Then
\begin{eqnarray*}
F_{3,4}(x_3,x_4,t)
&=&1+G_{3,4}(x_3,x_4,t)+H_{3,4}(x_3,x_4,t)\\
&=&1+G_{3,4}(x_3,x_4,t)+(C(t)-1)(x_3G_{3,4}(x_3,x_4,t)+1)\\
&=&\left(x_3C(t)-x_3+1\right)G_{3,4}(x_3,x_4,t)+C(t)\\
&=&\frac{1-\sqrt{1-4t}}{2t}-\frac{\left(1-\frac{1-\sqrt{1-4t}}{2t}\right)\left(1-x_3+\frac{1-\sqrt{1-4t}}{2t}x_3\right)
	\left(1-x_4+\frac{1-\sqrt{1-4t}}{2t}x_4\right)}{-1+\left(-1+\frac{1-\sqrt{1-4t}}{2t}\right)^2x_3x_4}.
\end{eqnarray*}
A few initial terms of $F_{3,4}(x_3,x_4,t)$ are
\begin{eqnarray*}
	&&F_{3,4}(x_3,x_4,t)\\
	&=&1+2t+(x_3+x_4+4)t^2+(4x_3+4x_4+2x_3x_4+10)t^3\\
	&&+(14x_3+14x_4+x_3^2x_4+x_3x_4^2+12x_3x_4+28)t^4\\
	&&+(48x_3+48x_4+2x_3^2x_4^2+8x_3^2x_4+8x_3x_4^2+54x_3x_4+84)t^5\\
	&&+\cdots.
\end{eqnarray*}
By symmetry, $F_{3,4}(1,x,t)=F_{4}(x,t)=F_3(x,t)=F_{3,4}(x,1,t)$. It is also clear that 
$F_{3,4}(0,0,t)=F_{2,5}(0,0,t)=2C(t)$, where $C(t)$ is the generating function of Catalan numbers, since if a path in 
$\mathcal{L}_n$ 
has no vertical or horizontal crossings, then the path either stays on or below the diagonal or on and above 
the diagonal. 

By Equation (\ref{obtain coef}), we see that coefficients of $x_3t^n$ in $F_{3,4}(x_3,x_4,t)$ 
yield the generating function of the number of paths in $\mathcal L_n$ that have exactly one horizontal crossing and no vertical 
crossings. We have computed that 
\begin{eqnarray*}
	\left.\frac{\partial F_{3,4}(x_3,0,t)}{\partial x_3}\right|_{x_3=0}&=&
	\frac{1-\sqrt{1-4t}+2t(-2+\sqrt{1-4t}+t)}{2t^2}\\
	&=&t^2+4t^3+14t^4+48t^5+165t^6+572t^7+7072t^8+\cdots\\
	&=&	\left.\frac{\partial F_{1}(x,t)}{\partial x}\right|_{x\rightarrow 0}\\
	&=&\left.\frac{\partial F_{2,5}(x_2,0,t)}{\partial x_2}\right|_{x_2=0},
\end{eqnarray*}
which implies  the number of paths in $\mathcal L_n$ having exactly one $P_3$-match and avoiding $P_4$ is equal to the number   of paths in $\mathcal L_n$ having exactly one $P_2$-match and avoiding $P_5$. This can be verified by the bijection defined in Section \ref{sec:bije}. However, coefficients of $x_3x_4t^n$ in $F_{3,4}(x_3,x_4,t)$ is not equal to the coefficient of $x_2x_5t^n$ in $F_{2,5}(x_2,x_5,t)$. 
This is due to the fact that  a path in $\mathcal{L}_n$ cannot cross the diagonal horizontally twice without crossing the diagonal vertically.  We have computed that 
\begin{eqnarray*}
	\left.\frac{\partial^2 F_{3,4}(x_3,x_4,t)}{\partial x_3\partial x_4}\right|_{x_3=x_4=0}
		&=&2\left.\frac{\partial F_{3}(x,t)}{\partial x}\right|_{x=0}\\
	&=&-\frac{8t^2\left(-1+\sqrt{1-4t}+2t\right)}{(\sqrt{1-4t}\left(1+\sqrt{1-4t}-2t\right)^3}\\
	&=& 2t^2+12t^3+54t^4+220t^5+858t^6+3276t^7+\cdots,
\end{eqnarray*}
The sequence $2, 12, 54, 220, 858, 3276,\cdots$ is Column 2 in A118920 and the exactly same interpretation is given by 	
Emeric Deutsch in the OEIS \cite{Slo}.

For $F_{{3,4}}(x,x,t)$, we can show that $F_{3,4}(x,x,t)=F_{2,5}(x,x,t)$  by the bijection defined in Section \ref{sec:bije}, which gives us that for a random $L\in\mathcal L_n$, the expectation of the number of crossings has asymptotic approximation as follows,
$$
\mathbb{E}[\{P_3,P_4\}\text{-mch}(L)]\sim\frac{\sqrt{\pi n}}{2}-1\approx 0.886\sqrt{n},
$$
and also
$$
\frac{1}{2}(F_{3,4}(1,1,t)+F_{3,4}(-1,-1,t))=
\frac{1}{2}(F_{2,5}(1,1,t)+F_{2,5}(-1,-1,t)).
$$

\subsection{$P_2$ and $P_4$}
Due to space limitations, we shall consider only one more set of patterns of size 2, namely, $\Delta = \{2,4\}$. 
First, we define
$$
F_{2,4}(x_2,x_4,t):=1+\sum_{n\geq 1}t^n\sum_{L\in\mathcal L_n}x_2^{P_2\text{-mch}(L)}x_4^{P_4\text{-mch}(L)}.
$$
\fig{1.3}{P2P4}{Pattern $P_2$ and $P_4$.}
$F_{2,4}(x_2,x_4,t)$ counts the number of lattice paths by the number of times it bounces off the diagonal to the right and by the 
the number of times it crosses the diagonal vertically. It follows that $F_{2,4}(x,x,t)$ is the generating function 
over lattice paths $L$ in $\mathcal{L}_n$ by the number of times $L$ touches the diagonal with a north step.  By symmetry, 
 $F_{3,5}(x,x,t)$ is also the generating function 
over lattice paths $L$ in $\mathcal{L}_n$ by the number of times $L$ touches the diagonal with an east step. 

First we define 
\begin{equation*}
G_{2,4}(x_2,x_4,t):=\sum_{n\geq 1}t^n\sum_{L\in \mathcal L_n\text{ starting with }E}x_2^{P_2\text{-mch}(L)}x_4^{P_4\text{-mch}(L)}
\end{equation*}
and
\begin{equation*}
H_{2,4}(x_2,x_4,t):=\sum_{n\geq 1}t^n\sum_{L\in \mathcal L_n\text{ starting with }N}x_2^{P_2\text{-mch}(L)}x_4^{P_4\text{-mch}(L)}.
\end{equation*}
Clearly,
$$
F_{2,4}(x_2,x_4,t)=1+G_{2,4}(x_2,x_4,t)+H_{2,4}(x_2,x_4,t).
$$
Employing the same decomposition that is used in Section \ref{sec:P2P5}, we have
\begin{eqnarray*}
G_{2,4}(x_2,x_4,t)&=&\sum_{i\geq 0}\sum_{j\geq 1}C_{i,j}x_2^it^j\left(x_4H_{2,4}(x_2,x_4,t)+1\right)\\
&=&(C(x_2,t)-1)\left(x_4H_{2,4}(x_2,x_4,t)+1\right)
\end{eqnarray*}
and
\begin{eqnarray*}
	H_{2,4}(x_2,x_4,t)&=&\sum_{j\geq 1}C_jt^j(G_{2,4}(x_2,x_4,t)+1)\\
	&=&(C(t)-1)(G_{2,4}(x_2,x_4,t)+1).
\end{eqnarray*}
Combining the two equations above, we can solve them for $G_{2,4}$,
\begin{eqnarray*}
	G_{2,4}(x_2,x_4,t)&=&-\frac{(C(x_2,t)-1)(x_4(C(t)-1)+1)}{x_4(C(x_2,t)-1)(C(t)-1)-1}.
\end{eqnarray*}
Then
\begin{eqnarray*}
	F_{2,4}(x_2,x_4,t)&=&1+G_{2,4}(x_2,x_4,t)+H_{2,4}(x_2,x_4,t)\\
	&=&1+G_{2,4}(x_2,x_4,t)+(C(t)-1)(G_{2,4}(x_2,x_4,t)+1)\\
	&=&C(t)(G_{2,4}(x_2,x_4,t)+1)\\
	&=&\frac{(x_2-2)\left(-1+\sqrt{1-4t}\right)+2(x_2-1)t}{x_4\left(-1+\sqrt{1-4t}\right)+x_2\left(2+\left(-1+\sqrt{1-4t}\right)+3x_4-x_4\sqrt{1-4t}\right)t}.
\end{eqnarray*}
A few initial terms are
\begin{eqnarray*}
	&&F_{2,4}(x_2,x_4,t)\\
	&=&1+2t+(x_2+x_4+4)t^2+(x_2^2+3x_2+5x_4+x_2x_4+10)t^3\\
	&&+(x_2^3+4x_2^2+9x_2+x_4^2+19x_4+x_2^2x_4+7x_2x_4+28)t^4\\
	&&+(x_2^4+5x_2^3+14x_2^2+28x_2+8x_4^2+68x_4+x_2^3x_4+2x_2x_4^2+9x_2^2x_4+32x_2x_4+84)t^5+\cdots.
\end{eqnarray*}

By Equation (\ref{obtain coef}), the coefficient of $x_2t^n$ in $F_{2,4}(x_2,x_4,t)$ is 
the number of paths in $\mathcal{L}_n$ which bounce off the diagonal to the right exactly one time but do not cross the diagonal vertically. We have 
computed that 
\begin{eqnarray*}
	\left.\frac{\partial F_{2,4}(x_2,0,t)}{\partial x_2}\right|_{x_2=0}&=&
	-\frac{\left(-1+\sqrt{1-4t}\right)^3}{8t}\\
	&=&t^2+3t^3+9t^4+28t^5+90t^6+297t^7+1001t^8+\cdots.
\end{eqnarray*}
The sequence $1,3,9,28,90,297,\cdots$ is  sequence A000245 in the OEIS \cite{Slo} which has  several interpretations such as the number of permutations on $\{1,2,\cdots,n+2\}$ that are 123-avoiding and for which the integer $n$ is in the third spot, the number of lattice paths in $\mathcal L_{n-1}$ which touch but do not cross the $y=x-1$ and the number of Dyck paths in $\mathcal L_{n}$ that start with `$EE$'.

Similarly, the coefficient of $x_4t^n$ in $F_{2,4}(x_2,x_4,t)$ is the number of paths 
in $\mathcal{L}_n$ which have exactly one vertical crossing but never bounce off the diagonal to the right. We have computed that 
\begin{eqnarray*}
	\left.\frac{\partial F_{2,4}(0,x_4,t)}{\partial x_4}\right|_{x_4=0}&=&
	-\frac{\left(-3+\sqrt{1-4t}\right)\left(-1+\sqrt{1-4t}+2t\right)^2}{8t^2}\\
	&=&t^2+5t^3+19t^4+68t^5+240t^6+847t^7+3003t^8+\cdots. 
\end{eqnarray*}
The sequence 
$1,5,19,68,240,\cdots$ is sequence A070857 in the OEIS \cite{Slo} which has no combinatorial interpretation. Thus we have given 
a combinatorial interpretation to this sequence. 

The coefficient of $x_2x_4t^n$ in $F_{2,4}(x_2,x_4,t)$ is the number of paths in $\mathcal{L}_n$ which bounce off the diagonal 
to the right exactly once and cross the diagonal vertically exactly one.  The corresponding generating function equals 
\begin{eqnarray*}
	\left.\frac{\partial^2 F_{2,4}(x_2,x_4,t)}{\partial x_2 \partial x_4}\right|_{x_2=x_4=0}&=&
	-\frac{\left(-1+\sqrt{1-4t}\right)^3\left(-2+\sqrt{1-4t}\right)\left(-1+\sqrt{1-4t}+2t\right)}{16t^2}\\
	&=&t^3+7t^4+32t^5+129t^6+495t^7+1859t^8+\cdots,
\end{eqnarray*}
which has no matches in the OEIS \cite{Slo}.

As we mentioned, $F_{2,4}(x,x,t)$ is the generating function for the times of paths touching the diagonal $y=x$ with a north step,
\begin{eqnarray*}
	F_{2,4}(x,x,t)&=&\frac{1-\sqrt{1-4t}-t-x+x^2t}{-x+(1+x)^2t}\\
	&=&1+2t+(2x+4)t^2+(2x^2+8x+10)t^3+(2x^3+12x^2+28x+28)t^4+\cdots\\
	&=& F_{2,5}(x,x,t)=F_{3,4}(x,x,t).
\end{eqnarray*}

This fact can be also shown by constructing a bijection. Let $\mathcal C_{n,k}$ denote the set of paths in $\mathcal L_{n}$ that cross the diagonal $k$ times and $\mathcal T_{n,k}$ denote the set of paths in $\mathcal L_{n}$ that touch the diagonal with a north step $k$ times. 

Next, we shall construct a bijection between $\mathcal T_{n,k}$ and $\mathcal{C}_{n,k}$, which is similar to the bijection defined in Section \ref{sec:bije}. For any path $L\in\mathcal T_{n,k}$, assume $L$ touches the diagonal $j$ times and these positions are denoted by $\{p_1,p_2,\cdots,p_j\}$. We let $p_0=[0,0]$ and $p_{j+1}=[n,n]$. If $p_i$ is a bouncing right position or $p_{i}$ is a horizontal crossing position, we flip the part between $p_{i-1}$ and $p_i$ along the diagonal. Then we obtain a new path $L'$. In this bijection, we can see that the number of crossings of $L$ is equal to the number of north-touchings of $L'$, and   the number of north-touchings of $L$ is equal to the number of crossings of $L'$. For example, in Figure \ref{fig:BijCross}, $L$ is mapped to $L'$ and $\{P_2,P_4\}\text{-mch}(L)=\{P_3,P_4\}\text{-mch}(L')=3$ and 
$\{P_3,P_4\}\text{-mch}(L)=\{P_2,P_4\}\text{-mch}(L')=2$.

\fig{1.0}{BijCross}{$L$ is mapped to $L'$ by the bijection.}

Because $F_{2,4}(x,x,t)=F_{3,4}(x,x,t)$, for a random $L\in\mathcal L_n$,
$$
\mathbb{E}[\{P_2,P_4\}\text{-mch}(L)]\sim \frac{\pi n}{2}-1\approx 0.886 \sqrt{n}
$$

\subsection{$P_2$, $P_3$, $P_4$ and $P_5$}
The last example of this section is a generating functions of a subset of $\{1, \ldots, 6\}$ of size $4$. That is, 
we shall study the generating function
$$
F_{2,3,4,5}(x_2,x_3,x_4,x_5,t):=1+\sum_{n\geq 1}t^n\sum_{L\in\mathcal L_n}x_2^{P_2\text{-mch}(L)}x_3^{P_3\text{-mch}(L)}x_4^{P_4\text{-mch}(L)}x_5^{P_5\text{-mch}(L)}.
$$
For convenience, in this subsection we use $F_{2,3,4,5}$ to denote $F_{2,3,4,5}(x_2,x_3,x_4,x_5,t)$,  $G_{2,3,4,5}$ to denote $G_{2,3,4,5}(x_2,x_3,x_4,x_5,t)$ and  $H_{2,3,4,5}$ to denote $H_{2,3,4,5}(x_2,x_3,x_4,x_5,t)$ where 
$$
G_{2,3,4,5}:=1+\sum_{n\geq 1}t^n\sum_{L\in\mathcal L_n\text{ starting with }E}\prod_{k=2}^5x_k^{P_k\text{-mch}(L)}
$$
and
$$
H_{2,3,4,5}:=1+\sum_{n\geq 1}t^n\sum_{L\in\mathcal L_n\text{ starting with }N}\prod_{k=2}^5x_k^{P_k\text{-mch}(L)}.
$$
Similar to the recurrences used in previous subsections, we have
\begin{eqnarray*}
	G_{2,3,4,5}&=&\sum_{i\geq 0}\sum_{j\geq 1}C_{i,j}x_2^it^j\left(x_4H_{2,3,4,5}+1\right)\\
	&=&(C(x_2,t)-1)\left(x_4H_{2,3,4,5}+1\right)
\end{eqnarray*}
and
\begin{eqnarray*}
	H_{2,3,4,5}&=&\sum_{i\geq 0}\sum_{j\geq 1}C_{i,j}x_5^it^j\left(x_3G_{2,3,4,5}+1\right)\\
	&=&(C(x_5,t)-1)\left(x_3G_{2,3,4,5}+1\right)
\end{eqnarray*}
Combining the two equations above, we can solve them for $G_{2,3,4,5}$,
\begin{eqnarray*}
	G_{2,3,4,5}&=&\frac{(C(x_2,t)-1)(x_4(C(x_5,t)-1)+1)}{x_3x_4(C(x_2,t)-1)(C(x_5,t)-1)-1}.
\end{eqnarray*}
Then
\begin{eqnarray*}
	&&F_{2,3,4,5}\\
	&=&1+G_{2,3,4,5}+H_{2,3,4,5}\\
	&=&1+G_{2,3,4,5}+(C(x_5,t)-1)\left(x_3G_{2,3,4,5}+1\right)\\
	&=&C(x_5,t)\left(x_3G_{2,3,4,5}+1\right)+(1-x_3)G_{2,3,4,5}\\
	&=&\frac{P(x_2,x_3,x_4,x_5,t)}{Q(x_2,x_3,x_4,x_5,t)},
\end{eqnarray*}
where
\begin{multline*}
P(x_2,x_3,x_4,x_5,t) = \\
(-1 + \sqrt{1 - 4 t} + 2 t) x_3 (-1 + x_4) + x_4 - \sqrt{1 - 4 t} x_4 -
2 t x_4+ x_2 (-(-1 + \sqrt{1 - 4 t}) (-2 + x_5) -\\
2 t (-1 + x_5)) +
2 \sqrt{1 - 4 t} x_5 + 2 t x_5 -
2 (-2 + \sqrt{1 - 4 t} + x_5)
\end{multline*}
and
\begin{multline*}
Q(x_2,x_3,x_4,x_5,t) = \\
2 + (-1 + \sqrt{1 - 4 t} +
2 t) x_3 x_4 + (-1 + \sqrt{1 - 4 t}) x_5 +
x_2 (-1 + \sqrt{1 - 4 t} - (-1 + \sqrt{1 - 4 t} + 2 t) x_5).
\end{multline*}

One can imagine that even a few initial terms of $F_{2,3,4,5}(x_2,x_3,x_4,x_5,t)$ are very long so that we 
will not list them here. However, it is easy to verify that the constant coefficients of $t^n$ is just $2C_n$ because there are two sets of Dyck paths, namely, 
the ones that stay on or above the diagonal and the ones that stay on or below the diagonal. 

By manipulating $F_{{2,3,4,5}}(x_2,x_3,x_4,x_5,t)$, one is able to answer certain complicated enumerative problems, such as how many paths in $\mathcal L_n$ are there that cross the diagonal vertically exactly once and horizontally exactly twice, and bounce off the diagonal to the right once but not to the left. The answer to this question has the generating function as follows,
\begin{eqnarray*}
&&\frac{1}{2!}\left.\frac{\partial^4 F_{2,3,4,5}(x_2,x_3,x_4,0,t)}{\partial x_2\partial x_3^2\partial x_4}\right|_{x_2=x_3=x_4=0}\\
&=&
\frac{\left(1-\sqrt{1-4t}\right)^5}{16}\\
&=&2t^5+10t^6+40t^7+150t^8+550t^9+2002t^{10}+7280t^{11}\cdots.
\end{eqnarray*}
Amazingly, the sequence $2,10,40,150 550,2002,\cdots$ is twice the  sequence A000344 in the OEIS \cite{Slo}, which has interpretations such as the number of paths in $
\mathcal L_{n-3}$ that touch but do not cross $y=x-2$ and the number of standard tableaux of shape $(n-1,n-5)$. We leave open the problem of finding a bijective proofs of these facts.

Next, we consider the formula $F_{2,3,4,5}(x,x,x,x,t)$ which gives us the generating functions for the times of touching the diagonal,
\begin{eqnarray*}
F_{2,3,4,5}(x,x,x,x,t)&=&\frac{1+(x-1)\left(-1+\sqrt{1-4t}\right)}{1+\left(-1+\sqrt{1-4t}\right)x}\\
&=&1+2t+(4x+2)t^2+(8x^2+8x+4)t^3+(16x^3+24x^2+20x+10)t^4\\
&&+(32x^4+64x^3+72x^2+56x+28)t^5+\cdots.
\end{eqnarray*}

Next, we want to ask for a random $L\in\mathcal L_n$ how many times in average that $L$ touches the diagonal. Applying the same idea that 
we used in previous sections, we see that 
\begin{eqnarray*}
	\left.\frac{\partial F_{2,3,4,5}(x,x,x,x,t)}{\partial x}\right|_{x=1}&=&\frac{\left(\sqrt{1-4t}-1\right)^2}{4t-1}\\
	&=&4t^2+24t^3+116t^4+520t^5+2248t^6+9530t^7+\cdots\\
	&=&4	\left.\frac{\partial F_{2}(x,t)}{\partial x}\right|_{x=1}.
\end{eqnarray*}
So for a random $L\in\mathcal L_n$, the expectation of times $L$ touches the diagonal is that
$$
\mathbb{E}[\{P_2,P_3,P_4,P_5\}\text{-mch}(L)]=\frac{4^n-4\binom{2n-1}{n-1}}{\binom{2n}{n}}\sim \sqrt{\pi n}-2\approx 1.772\sqrt{n}.
$$

Similarly, we can also obtain the generating functions for the number of paths touching the diagonal an even number of times or 
an odd number of times.  We have computed that 
\begin{eqnarray}
&&\frac{1}{2}\left(F_{2,3,4,5}(1,1,1,1,t)+F_{2,3,4,5}(-1,-1,-1,-1,t)\right)\\
&=&\frac{4t+\sqrt{1-4t}}{4t+2\sqrt{1-4t}-1}\\
&=&1+2t+2t^2+12t^3+34t^4+132t^5+468t^6+1752t^7+6530t^8+\cdots. \label{eq:P2345even}
\end{eqnarray}
and 
\begin{eqnarray*}
	&&\frac{1}{2}\left(F_{2,3,4,5}(1,1,1,1,t)-F_{2,3,4,5}(-1,-1,-1,-1,t)\right)\\
	&=&-\frac{2\left(-1+\sqrt{1-4t}+2t\right)}{-1+2\sqrt{1-4t}+4t}\\
&=&4t^2+8t^3+36t^4+120t^5+456t^6+1680t^7+6340t^8+\cdots.
\end{eqnarray*}
Neither of the two series have matches in the OEIS \cite{Slo}. 

By observing Equation (\ref{eq:P16even}) and (\ref{eq:P16odd}), we find that coefficient of $t^k$ in Equation (\ref{eq:P2345even}) is equal to
$$
\begin{cases}
\text{coefficient of $t^k$ in }\frac{1}{2}(F_{1,6}(1,1,t)-F_{1,6}(1,1,t))\text{, if }k\text{ is even}\\
\text{coefficient of $t^k$ in }\frac{1}{2}(F_{1,6}(1,1,t)+F_{1,6}(1,1,t))\text{, if }k\text{ is odd}.
\end{cases}
$$

This is because all the six patterns in $\mathcal L_2$ are mutually exclusive. For any path $L\in\mathcal L_{k}$, $\mathcal L_{2}\text{-mch}(L)=k-1$, which implies that
$$
\{P_1,P_6\}\text{-mch}(L)+\{P_2,P_3,P_4,P_5\}\text{-mch}(L)=k-1.
$$
If $k$ is odd, $\{P_1,P_6\}\text{-mch}(L)$ and $\{P_2,P_3,P_4,P_5\}\text{-mch}(L)$ have the same parity and otherwise, they do not.

\section{Future research}
In this paper, we computed the generating functions $F_{P_k}(x,t)$ for $k =1, \ldots 6$ and $F_{\Delta}(\mathbf{x},t)$ for certain selected $\Delta \subseteq \{1, \ldots,n\}$.  In a subsequent paper, we will systematically compute $F_{\Delta}(\mathbf{x},t)$ for all 
sets of size two. There are only nine such generating functions up to symmetry and we 
have computed five of them since $F_{P_2,P_3}(x_2,x_3,t)$ is a specialization of $F_{2,3,4,5}$. The ones that we have not computed 
in the paper are represented by $F_{P_1,P_2}(x_1,x_2,t)$, $F_{P_1,P_3}(x_1,x_3,t)$, $F_{P_1,P_4}(x_1,x_4,t)$, and 
$F_{P_1,P_5}(x_1,x_5,t)$. As a special case for pattern $P_1,P_2$, $F_{1,2}(x,x,t)$ keeps track of the number of paths in $\mathcal L_n$ that have $k$ steps below the diagonal. For any fixed $k$, the coefficient of $x^kt^n$ in $F_{1,2}(x,x,t)$ is also Catalan number $C_n$, shown by Chung and Feller \cite{CF}. We shall explore these generating functions in a subsequent paper where we will also add 
some additional parameters which keep track of both the area below the diagonal and the area above the diagonal 
in path in $\mathcal{L}_n$. 

There are many interesting bijective problems that arise from our results.  For example, in Section 3.1, we find that the total east steps below $y=x-1$ of all the paths in $\mathcal L_n$ is equal to the total area under all Dyck paths in $\mathcal L_n$. We take $\mathcal L_3$ as an example, there are 6 paths having $P_1$-matches and there are 5 Dyck paths, pictured in Figure \ref{fig:Dyck}. The total east steps below $y=x-1$ is equal to $2+2+1+1+1+1=8$ and the total area under all the Dyck paths  is also equal to $0+1+2+2+3=8$. Although how to design the bijection is unknown, it is interesting to see paired pattern matching does have connection to other statistics for lattice paths.

\fig{1}{Dyck}{Total number of east steps below $y=x-1$ in $\mathcal L_n$ equals the total area below Dyck paths in $\mathcal D_n$, $n=3$ as an example.}

Another direction for further research is to consider  Delannoy paths. In this paper, we only consider paths consisting of 
north steps $[0,1]$ and east steps $[1,0]$. Naturally, we can extend our definitions to  Delannoy paths which are paths 
consisting of east steps $[1,0]$, north steps $[0,1]$, and northeast steps $[1,1]$ which start at $[0,0]$ 
and end at $[n,n]$. We denote the steps $[1,0]$, $[0,1]$ and $[1,1]$ by $E$, $N$ and $D$ respectively.
The set of all the   Delannoy paths from $[0,0]$ to $[n,n]$ is denoted by $\mathcal S_n$.

According to \cite{Sch}, a Schr\"{o}der path is a path from $[0,0]$ to $[n,n]$ consisting of east steps  $[1,0]$, north steps  $[0,1]$, and northeast steps $[1,1]$ which never goes above the diagonal $y=x$. The number of Schr\"{o}der paths  from $[0,0]$ to $[n,n]$ is counted by large Schr\"{o}der number $D_n$ whose ordinary generating function equals 
$$
	D(x)=\sum_{n\geq 0}D_nx^n=\frac{1-x-\sqrt{1-6x+x^2}}{2x}
	=1+2 x+6 x^2+22 x^3+90 x^4+394 x^5+\cdots.
$$
The $n^{th}$ little  Schr\"{o}der number $\tilde{D}(n)$ counts the number of Schr\"{o}der paths from  $[0,0]$ to $[n,n]$ without northeast 
steps on the diagonal $y=x$ whose ordinary generating function equals 
$$
\tilde{D}(x)=
\sum_{n\geq 0}\tilde{D}_nx^n= \frac{1 + x - \sqrt{1 - 6x + x^2} }{4x}
=1+ x+3 x^2+11 x^3+45 x^4+197 x^5+\cdots.
$$

Here, we adopt the same definition of paired pattern for  Delannoy paths. For example, in Figure \ref{fig:Schorder}, $L=EDNDDNNEDE\in \mathcal S_7$. $ps_L(1,2)=ENNE=P_4$ and $ps_L(2,3)=NNEE=P_6$, that is, $P_4\text{-mch}(L)=P_6\text{-mch}(L)=1$. It matches our observation: $L$ crosses the diagonal $y=x$ `vertically' once and there is one east step above $y=x+1$.
\fig{1.3}{Schorder}{$L=EDNDDNNEDE\in\mathcal S_7$.}

We take pattern $P_4$ as example, $P_4\text{-mch}(L)$ is the number of times $L$ crosses the diagonal $y=x$ vertically. We shall study the ordinary generating function
$$
	FS_{4}(x,t):=1+\sum_{n\geq 1}t^n\sum_{L\in\mathcal S_n}x^{P_4\text{-mch}(L)}.
$$
We split the discussion into two cases. Case 1 are the paths in $\mathcal{S}_n$ 
that  start with a north step and Case 2 are the path in $\mathcal{S}_n$ that start with an east step or a northeast step.
We define 
\begin{equation*}
GS_4(x,t):=\sum_{n\geq 1}t^n\sum_{L\in \mathcal S_n\text{ starting with $E$ or $D$}}x^{P_4\text{-mch}(L)}
\end{equation*}
and
\begin{equation*}
HS_4(x,t):=\sum_{n\geq 1}t^n\sum_{L\in \mathcal S_n\text{ starting with $N$}}x^{P_4\text{-mch}(L)}.
\end{equation*}
Clearly,
$$
FS_4(x,t)=1+GS_4(x,t)+HS_4(x,t).
$$
We obtain following formulas based on recursion on where is the first time the path starting with `$E$' or `$D$' crosses the diagonal $y=x$ from bottom to top.
$$
GS_4(x,t)=\left(D(t)-\frac{1}{1-t}\right)(xHS_4(x,t)+1)+\frac{t}{1-t}(HS_4(x,t)+1)
$$
Similarly, we consider where is the first time a path starting with a north step and having no northeast steps on the diagonal crosses the diagonal `horizontally'.
$$
HS_4(x,t)=\left(\tilde{D}(t)-1\right)(GS_4(x,t)+1)
$$
Solving for $GS_4(x,t)$, we have
$$
GS_4(x,t)=-\frac{(t-1)D(t)((\tilde{D}(t)-1)x+1)+(\tilde{D}(t)-1)x-t+1}{(\tilde{D}(t)-1)x(D(t)(t-1)+1)-2t+1} 
$$

Then we have
\begin{eqnarray*}
FS_4(x,t)&=&1+GS_4(x,t)+HS_4(x,t)\\
&=&1+GS_4(x,t)+\left(\tilde{D}(t)-1\right)\left(GS_4(x,t)+1\right)\\
&=&\tilde{D}(t)(GS_4(x,t)+1)\\
&=& \frac{2}{3+\sqrt{1-6t+t^2}-\frac{2(x-1)}{t-1}+t(x-1)-3x+\sqrt{1-6t+t^2}x}.
\end{eqnarray*}

A few initial terms of $FS_4(x,t)$ are
\begin{eqnarray*}
FS_4(x,t)&=&1+3t+(x+12)t^2+(11x+52)t^3+(x^2+84x+236)t^4\\
&&+(19x^2+556x+1108)t^5+(x^3+220x^2+3428x+5340)t^6+\cdots.
\end{eqnarray*}

By setting $x=0$ in $FS_4(x,t)$, we obtain the generating function of 
the number of  Delannoy paths that do not cross the diagonal vertically,
\begin{eqnarray*}
	FS_4(0,t)&=&\frac{(t-1)\left(-1+3t+\sqrt{1-6t+t^2}\right)}{t^2\left(3-t+\sqrt{1-6t+t^2}\right)}\\
	&=&1+3t+12t^2+52t^3+236t^4+1108t^5+5340t^6+\cdots.
\end{eqnarray*}
The sequence $1,3,12,52,236,\cdots$ does not appear in the OEIS \cite{Slo}.

Finally, one can study pattern matching for paired patterns in both lattice paths and  Delannoy paths for patterns $P$ of length $\geq 6$. For example, based on Definitions 1 and 2 and Theorem 3 and 4, one can 
obtain geometric interpretations for the number of $P$-matches in a path $L$. 

\fig{1.2}{long}{Examples of two patterns in $\mathcal L_3$}

For example, consider the two patterns $P_a$ and $P_b$ are pictured in Figure \ref{fig:long}. Note that $P_a$ has one east step below $y=x-2$ and $P_b$ has a vertical crossing immediately followed by a horizontal crossing. For any path $L\in\mathcal L_n$, $P_a\text{-mch}(L)$ can be interpreted as the number of east steps of $L$ below $y=x-2$ and $P_b\text{-mch}(L)$ can be interpreted as the number of such pairs of crossings of $L$.

\end{document}